\newtheorem{theorem}{Theorem}[section]
\newtheorem{proposition}[theorem]{Proposition}
\newtheorem{corollary}[theorem]{Corollary}
\newtheorem{lemma}[theorem]{Lemma}
\newtheorem{definition}[theorem]{Definition}
\newtheorem{remark}[theorem]{Remark}
\newcommand{\N}{\mathbb{N}}
\newcommand{\Nuc}{\mathcal{N}}
\newcommand{\U}{\mathfrak A}
\newcommand{\B}{\mathcal{L}}
\newcommand{\E}{\mathcal{E}}
\newcommand{\I}{\mathcal{I}}
\newcommand{\ep}{\ell_{p}}
\newcommand{\eq}{\ell_{q}}
\DeclareMathOperator{\id}{id}
\begin{document}

\title{Diagonal extendible multilinear operators between $\ell_{p}$-spaces}

\author{Daniel Carando
\and
Ver\'{o}nica Dimant
\and
Pablo Sevilla-Peris
\and
Rom\'an Villafa\~ne}

\thanks{The first, second and fourth authors were partially supported by CONICET PIP 0624 and UBACyT 20020100100746.
The third author was supported by MICINN Project MTM2011-22417. The fourth author has a doctoral fellowship from CONICET}

\date{}

\subjclass[2010]{47H60,46B45,46G25}
\keywords{Multilinear mappings, sequence spaces, extension of multilinear operators}

\address{Departamento de Matem\'{a}tica - Pab I,
Facultad de Cs. Exactas y Naturales, Universidad de Buenos Aires,
(C1428EGA) Buenos Aires, Argentina and IMAS -  CONICET.} \email{dcarando@dm.uba.ar}

\address{Departamento de Matem\'{a}tica, Universidad de San
Andr\'{e}s, Vito Dumas 284, (B1644BID) Victoria, Buenos Aires,
Argentina and CONICET.} \email{vero@udesa.edu.ar}

\address{Instituto Universitario de Matem\'{a}tica Pura y Aplicada and DMA, ETSIAMN, Universitat Polit\`{e}cnica de Val\`{e}ncia, Valencia, Spain} \email{psevilla@mat.upv.es}

\address{Departamento de Matem\'{a}tica - Pab I,
Facultad de Cs. Exactas y Naturales, Universidad de Buenos Aires,
(C1428EGA) Buenos Aires, Argentina and IMAS - CONICET.} \email{rvillafa@dm.uba.ar}

\begin{abstract}We study extendibility of diagonal multilinear operators from $\ell_p$ to $\ell_q$ spaces. We determine the values of $p$ and $q$
for which every diagonal $n$-linear operator is extendible, and those for which the only extendible ones are integral. We address the same question
for multilinear forms on $\ell_p$.
\end{abstract}

\maketitle

\section*{Introduction}

There is no Hahn-Banach Theorem for linear/multilinear operators, nor for multilinear forms. This makes \textit{extendibility} of such mappings a subject of
interest, which has connections with different branches of functional analysis and Banach space theory (local theory, tensor norms, geometry of Banach spaces,
etc.). A multilinear operator (on some Banach space) $T \in \mathcal{L} (^{n} E,F)$ is \textit{extendible} (see \cite{Ca01,CaGaJa01,JaPaPGVi07,KiRy98}) if for every Banach space $X$ containing
$E$ there exists $\tilde{T} \in  \mathcal{L} (^{n} X,F)$ extending $T$. For fixed Banach spaces $E$ and $F$, one may wonder which multilinear operators from $E$ to $F$ are extendible. There are two extreme cases which are particularly important: sometimes extendible $n$-linear operators are ``as few as possible'' and sometimes they are ``as many as possible''. More precisely, since integral multilinear operators are always extendible, the ``as few as  possible''-case occurs when only the integral mappings are extendible. On the other hand, the ``as many as possible''-situation is when all the
multilinear operators are extendible. This problem, for bilinear forms on Banach sequence spaces, has been recently addressed in~\cite{CaSe2}.

In this article we focus on particular sequence spaces: $\ell_p$-spaces.
The study of diagonal operators on $\ell_p$-spaces started in the early 1970's with the work of (among others) Carl \cite{Carl76}, K\"onig \cite{Ko75} and Pietsch \cite{Pi78} and by
now they are a well established part of the theory. On the other hand, in \cite{Ca01, CaDiSe} we have studied diagonal multilinear forms on these spaces
(see also \cite{CaDiSe3} where multilinear forms on Lorentz sequence spaces were also considered).
In this paper we carry on with the study of diagonal multilinear forms, and also extend
it to diagonal multilinear operators.
In this context, we want to characterize the extendible elements, and the extreme cases mentioned above must be rephrased for diagonal mappings.
Thus, we want to determine which values of $p$, $q$ and $n$ make the set of diagonal extendible $n$-linear operators from $\ell_p$ to $\ell_q$ equal  to the set of integral mappings, which make it  equal to {{the whole}} space of continuous diagonal $n$-linear operators from $\ell_p$ to $\ell_q$, and which make it something in between. For this, we relate properties of the multilinear mapping to summability properties of its coefficients. The conclusions of the main result of the article (Theorem~\ref{main}) are illustrated in Tables \ref{tablanucint} and \ref{tablaexten} in Section 1.

Section 2 is devoted to diagonal multilinear forms. Several steps in the scalar valued settings were given in \cite{Ca01, CaDiSe}.
It is known that in $\ell_{1}$ every continuous  (and, in particular, every extendible) diagonal form is integral.
Also,  every diagonal form  on $c_0$ or $\ell_{\infty}$ is nuclear. For the non-trivial range of $p$,  Proposition~3.1 in \cite{CaDiSe} shows that for $p \geq 2$ a diagonal
$n$-linear form on $\ell_{p}$ is extendible if and only if it is nuclear. Corollary~3.1 in \cite{CaDiSe} shows that  there exists
a diagonal $n$-linear form that is extendible but not nuclear in every $\ell_{p}$ with $\frac{2(n-1)}{2n-3} < p < 2$,
leaving unsolved the question for the remaining values of $p$. We complete here these results. For example, a consequence of Theorem~\ref{main forms} is that there are extendible diagonal $n$-linear forms that are not integral on  $\ell_p$ for every $1<p<2$ and $n\ge 3$. Another one is that there are non-extendible diagonal $n$-linear forms in every $\ell_p$ for $1<p<\infty$.

Our main theorem, regarding multilinear operators, and its proof is the {content} of Section~3. This result
gives a rather complete account of the summability conditions of the coefficients of a multilinear operator $T$ from $\ell_p$ to $\ell_q$  which are necessary and sufficient for $T$ being integral or extendible. As a consequence, the existence (or lack) of diagonal multilinear operators which are not extendible, or which are extendible but not integral, is established for every $p$ and $q$ (see Table~\ref{tablaexten}).

As a byproduct of some of our results we show that spaces of diagonal multilinear forms/operators behave very differently to spaces of all multilinear forms/operators. For example, there are always non-extendible diagonal bilinear forms on $\ell_p\times \ell_1$ if  $p>1$ but, surprisingly,
 every diagonal trilinear form on $\ell_p\times \ell_1\times\ell_1 $ is extendible (see the comments after Lemma~\ref{ejemplo todas extendibles}). See Proposition~\ref{integrales nucleares} and the subsequent comments for another unexpected behaviour.

For the theory of polynomials and multilinear mappings on Banach spaces we refer the reader to the books of Dineen \cite{Di99} and Mujica \cite{Mu86}.

\section{Preliminaries}
We consider Banach spaces over $\mathbb K=\mathbb R$ or $\mathbb C$. For simplicity, we write some of our proofs for complex spaces, but all the results hold for both the real and complex  cases.
For Banach spaces $E$ and $F$, we denote by $\mathcal L(^nE,F)$ the space  of continuous $n$-linear mappings from $E\times\cdots\times E$  into $F$. This is a Banach space if we consider the norm
$$
\|T\|=\sup\{\|T(x_1,\dots, x_n)\|:\, x_i\in B_E,\, i=1,\dots,  n\}.
$$
Here $B_{E}$ denotes the closed unit ball of $E$.\\

We are going to work mainly with Minkowski $\ell_{p}$-spaces and  nuclear, integral and extendible multilinear mappings between them. We recall their definitions. Given $1 < p < \infty$, we denote its conjugate by $p'$, that is the number satisfying $1= \frac{1}{p} + \frac{1}{p'}$. As usual, $1$ and $\infty$ are conjugate to each other.\\

An $n$-linear mapping $T\in\mathcal L(^n \ep, \eq)$  is said to be \textbf{nuclear} (see \cite{Al85})  if it can be written as
\begin{equation}\label{def:nuclear}
T(x_1,\dots, x_n)=\sum_{j=1}^\infty \gamma_1^{(j)}(x_1)\cdots \gamma_n^{(j)}(x_n)y_j \text{ for all } x_1,\dots, x_n\in \ep,
\end{equation}
where $\gamma_i^{(j)}\in \ell_{p'}$, $y_j\in \eq$, for every $i,j$, and $\sum_{j=1}^\infty \|\gamma_1^{(j)}\|\cdots \|\gamma_n^{(j)}\|\cdot \|y_j\|<\infty$. We denote by
$\mathcal{N}(^n \ep,\eq)$  the space of nuclear $n$-linear mappings from $\ep\times\cdots\times \ep$  to $\eq$. This is a Banach space if we endow it with the norm
$$
\|T\|_{_{\mathcal N}}=\inf\left\{\sum_{j=1}^\infty \|\gamma_1^{(j)}\|\cdots \|\gamma_n^{(j)}\|\cdot \|y_j\|\right\},
$$
where  the infimum is taken over all the nuclear representations of $T$ as in (\ref{def:nuclear}).

An $n$-linear mapping $T\in\mathcal L(^n \ep, \eq)$  is said to be \textbf{integral} if there exists a regular $\eq$-valued measure $G$ of bounded
variation on the product $B_{\ell_{p'}}\times\cdots\times B_{\ell_{p'}}$ such that
$$
T(x_1,\dots, x_n)=\int_{B_{\ell_{p'}}\times\cdots\times B_{\ell_{p'}}}\gamma_1(x_1)\cdots \gamma_n(x_n)\, dG(\gamma_1,\dots,\gamma_n),
$$
for all $(x_1,\dots, x_n)\in \ep \times \cdots \times \ep$. The integral norm  of $T$ is defined as the infimum of the total variation of $G$ over all measures $G$ representing $T$. With this norm $\mathcal{I}(^n \ep, \eq)$,  the space of all integral $n$-linear mappings from $\ep \times\cdots\times \ep$  to $\eq$
is a Banach space.\\
There are two general definitions of integral $n$-linear operators in the literature (Grothendieck and Pietsch integral) but it is known \cite{Vi} that in our context both coincide.

We denote by $\mathcal{E}(^n \ep, \eq)$ the space consisting of all extendible mappings in $\mathcal{L}(^n \ep, \eq)$; it is a Banach space if we endow it with the norm
\begin{multline*}
\|T\|_{_{\mathcal E}}=\inf\{C>0:\\ \forall E \supset \ep \text{ there exists an extension }\widetilde{T}\in\mathcal L(^n E,\eq)\text{ with } \|\widetilde T\|\le C\}.
\end{multline*}

Every nuclear $n$-linear mapping is clearly integral. The reverse inclusion sometimes holds: for $1<p<\infty$, the space $\ell_p$ is Asplund and then integral multilinear operators on $\ell_p$ are nuclear \cite{Al85}. On the other hand, by \cite{CaLa}, integral $n$-linear operators are
extendible. Thus, we  have the following chain of embeddings:
\begin{equation}\label{chain}
\mathcal N(^n \ep, \eq)\subseteq \mathcal I(^n \ep, \eq)\subseteq \mathcal E(^n \ep, \eq)\subseteq \mathcal L(^n \ep, \eq).
\end{equation}
Regarding the extendible operators we have two extreme cases: when only the integral operators are extendible or when all operators are so. Our main aim in this paper
is to try to characterize these two cases {for a class of  distinguished multilinear operators}, the so called `diagonal mappings'. We will also determine when nuclear
and integral diagonal mappings coincide (this last question has interest only for $p=1$ and $p=\infty$, by the above mentioned result in \cite{Al85}).\\

An $n$-linear operator $T \in\mathcal{L}(^{n} \ep, \eq)$ is said to be \textbf{diagonal} if there exists a sequence $\alpha=(\alpha(k))_{k}$ such that for all
$x_{1}, \dots,x_{n} \in \ep$ we can write
\begin{equation*}
T(x_{1}, \dots,x_{n}) = \sum_{k} \alpha(k) x_{1}(k) \cdots x_{n}(k)\ e_k \, ,
\end{equation*}
where $e_k$ denotes the $k$-th canonical unit vector: $e_k(j)=\delta_{k,j}$ for $k,j \in \mathbb{N}$. We denote by $T_\alpha$ the diagonal multilinear mapping given by the sequence $\alpha$.

Recall that a Banach sequence space is a Banach space $E \subseteq \mathbb{K}^{\mathbb{N}}$ of sequences in $\mathbb{K}$ such that
$\ell_{1} \subseteq E \subseteq \ell_{\infty}$ satisfying that if $x \in \mathbb{K}^{\mathbb{N}}$ and $y \in E$ are such that $\vert x(k) \vert \leq \vert y(k) \vert$
for all $k \in \mathbb{N}$ then $x \in E$ and $\Vert x \Vert \leq \Vert y \Vert$.
\begin{definition} \label{def elene}
For a Banach ideal of $n$-linear mappings $\mathfrak{A}=\mathcal{N,I,E,L}$ we define the set
$$
\ell_n(\U,p,q)=\{ \alpha\in\ell_\infty : T_\alpha\in\U(^n \ep , \eq) \},
$$
which is a Banach sequence space with the norm   $\|\alpha\|_{_{\ell_n(\U,p,q)}}=\|T_\alpha\|_{{\U(^n \ep, \eq)}}$.
\end{definition}
Then, $\ell_n(\U,p,q)$ describes the space of diagonal $n$-linear mappings from $\ep$ to $\eq$ that belong to the ideal $\mathfrak A$. For example, for  $\mathcal{L}$  the ideal of continuous multilinear
operators, it is easy to check using H\"older's inequality that
\begin{equation*} 
\ell_n(\mathcal L,p,q)\overset{1}{=} \left\{\begin{array}{cl}
  \ell_\infty \quad & \text{ if }p\leq nq \\
  \  \\
  \ell_r \quad & \text{ if }p> nq\\
\end{array}\right.%
\end{equation*}
where $r$ is defined by $\frac 1 r= \frac1q-\frac np$. The notation $E\overset{1}{=}F$ means that $E$ and  $F$ are isometrically isomorphic.
Then the chain of embeddings \eqref{chain} implies that
\begin{equation*}
\ell_n(\mathcal N,p,q)\subseteq\ell_n(\mathcal I,p,q)\subseteq \ell_n(\mathcal E,p,q)\subseteq\ell_n(\mathcal L,p,q)
\end{equation*}
Our aim is to study when these inclusions are strict or not. A description of these sequence spaces is given in  our main result
(Theorem~\ref{main}). As an immediate consequence of it we obtain the following tables.

\bigskip

\begin{center}
\begin{table}[h]
\begin{tabular}{|c|ccc|}
 \hline
$\ell_{n} (\mathcal{N},p,q) \neq \ell_{n} (\mathcal{I},p,q)$ & $p=1$& and & $q=\infty$ \\ \hline
$\ell_{n} (\mathcal{N},p,q) = \ell_{n} (\mathcal{I},p,q)$ &  \multicolumn{3}{c|}{otherwise} \\ \hline
\end{tabular}

\medskip
\caption{}
\label{tablanucint}
\end{table}

\begin{table}[h]
\begin{tabular}{|c|ccc|}
\hline
$\ell_{n} (\mathcal{I},p,q)= \ell_{n} (\mathcal{E},p,q)= \ell_{n} (\mathcal{L},p,q)$ & $p=1$& and & $q=\infty$ \\
 & $p=\infty$& and & $q=1$ \\ \hline
$\ell_{n} (\mathcal{I},p,q)= \ell_{n} (\mathcal{E},p,q) \neq \ell_{n} (\mathcal{L},p,q)$ & $2 \leq p < \infty$& and &  $q=1$ \\ \hline
$\ell_{n} (\mathcal{I},p,q) \neq \ell_{n} (\mathcal{E},p,q) = \ell_{n} (\mathcal{L},p,q)$ & $p=1$& and  & $1 \leq q < \infty$ \\
 & $1<p<\infty$& and  & $q=\infty$ \\
 & $p=\infty$& and  & $1< q \leq \infty$ \\ \hline
 $\ell_{n} (\mathcal{I},p,q) \neq \ell_{n} (\mathcal{E},p,q) \neq \ell_{n} (\mathcal{L},p,q)$ & $1<p<2$& and & $q=1$ \\
  & $1< p < \infty$& and  & $1 < q < \infty$ \\ \hline
\end{tabular}
\medskip

\caption{}
\label{tablaexten}
\end{table}
\end{center}


\section{Diagonal multilinear forms in $\ell_p$ spaces}
There is a natural isometric identification between $\mathcal L(^n \ep, \eq)$ and the space of continuous $(n+1)$-linear applications from
$\ep \times \cdots \times \ep \times \ell_{q'}$ to $\mathbb K$ (for $q=1$, we put $c_0$ instead of $\ell_\infty$ for the identification to be onto). The definitions of nuclear, integral and extendible $n$-linear operator can be
modified in an obvious way to consider mappings defined on $\ell_{p_{1}} \times \cdots \times \ell_{p_{n}}$ with values on $\mathbb{K}$. It can be easily seen that this identification is also an isometric isomorphism for the classes of nuclear and integral mappings. However, this is not the case for the extendible operators. An extendible $(n+1)$-linear
form defined on $\ep \times \cdots \times \ep \times \ell_{q'}$ produces an extendible $n$-linear operator from $\ep \times \cdots \times \ep$ to $\eq$ (since $\ell_1$ is complemented in its bidual, for $q=1$ we can use either $\ell_\infty$ or $c_0$ instead of $\ell_{q'}$). However,  the
converse is not true, as we will see in Remark~\ref{identification}. Anyway, it will be helpful for our general goal to look first at multilinear forms.

One of the aims in \cite{Ca01,CaDiSe} was precisely to determine when the diagonal extendible multilinear forms are few (that is, they coincide with
the integral multilinear forms) and when they {are as many as they can} (all continuous multilinear forms are extendible). The case  $1<p<2$ was left open and we fill this
gap here.\\

For a  sequence $\alpha$, the  diagonal $n$-linear form  $\phi_{\alpha} \in\mathcal{L}(^{n} \ep)$ associated to $\alpha$ is given by
$$
\phi_{\alpha}(x_{1}, \dots,x_{n}) = \sum_{k} \alpha(k) x_{1}(k) \cdots x_{n}(k)
$$
for  $x_{1}, \dots,x_{n} \in \ep$ (whenever this mappings is well defined). Also, like in Definition~\ref{def elene}, for $\mathfrak{A}=\mathcal{N,I,E,L}$ we define
$\ell_n(\U,p)=\{ \alpha\in\ell_\infty : \phi_\alpha\in\U(^n \ep) \}$, which again is a Banach sequence space with the norm
$\|\alpha\|_{_{\ell_n(\U,p)}}=\|\phi_\alpha\|_{_{\U(^n\ell_p)}}$.

The following theorem describes the sequence spaces $\ell_n(\mathcal N,p)$,  $\ell_n(\mathcal I,p)$, $\ell_n(\mathcal E,p)$ and $\ell_n(\mathcal L,p)$.
One consequence is that the inclusion  $\ell_n (\mathcal I,p)\subset \ell_n(\mathcal E,p)$ is strict for all $1<p<2$ and $n\ge 3$, while the inclusion $\ell_n(\mathcal E,p)\subset \ell_n(\mathcal L,p)$ is strict for $1<p<\infty$ and $n\ge 2$.
\begin{theorem} \label{main forms}
 $\ $
\begin{enumerate}
\item $\ell_n(\mathcal N,1)=c_0\subsetneq\ell_\infty=\ell_n(\mathcal I,1)=\ell_n(\mathcal E,1)= \ell_n(\mathcal L,1) $.

\item If $1<p< 2$ then
\begin{gather*}
\ell_2(\mathcal N,p)=\ell_2(\mathcal I,p)=\ell_2(\mathcal E,p)=\ell_{\frac{p'}{2}} \subsetneq\ell_\infty = \ell_2(\mathcal L,p),  \\
\text{For }n\ge 3,\quad\ell_n(\mathcal N,p)=\ell_n(\mathcal I,p)=\ell_{\max (\frac{p'}{n} , 1)} {\subsetneq}\ell_{\frac{p'}{2}}=\ell_n(\mathcal E,p)\subsetneq\ell_\infty= \ell_n(\mathcal L,p).
\end{gather*}

\item If $2\le p<\infty$ then
$$
\ell_n(\mathcal N,p)=\ell_n(\mathcal I,p)=\ell_n(\mathcal E,p)=\ell_1 \subsetneq \ell_n(\mathcal L,p)= \left\{
\begin{array}{l}
 \ell_{\frac{p}{p-n}} \text{ for } n< p \,.\\
\ell_\infty \text{ for } n \geq p \, .
\end{array}
\right.
$$
\item $\ell_n(\mathcal N,\infty)=\ell_n(\mathcal I,\infty)=\ell_n(\mathcal E,\infty)= \ell_n(\mathcal L,\infty)=\ell_1$.
\end{enumerate}
\end{theorem}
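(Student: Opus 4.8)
The plan is to compute each of the four sequence spaces in turn and then read off the (strict) inclusions from a comparison of exponents.

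\emph{The continuous, nuclear and integral forms.} First I would record, exactly as for the operator formula, that H\"older's inequality gives $\ell_n(\mathcal L,p)\overset{1}{=}\ell_\infty$ when $p\le n$ and $\ell_n(\mathcal L,p)\overset{1}{=}\ell_{p/(p-n)}$ when $p>n$: indeed $\phi_\alpha$ is continuous iff $\alpha$ lies in the K\"othe dual of $\ell_{p/n}$, and testing on the $e_k$ shows the norm is attained. This yields the last columns of all four statements. For $1<p<\infty$ I would then use the identification $\mathcal N(^n\ep)\overset{1}{=}\ell_{p'}\widehat{\otimes}_{\pi}\cdots\widehat{\otimes}_{\pi}\ell_{p'}$ (valid since $\ell_{p'}$ has the approximation property), under which $\phi_\alpha$ corresponds to the diagonal tensor $\sum_k\alpha(k)\,e_k\otimes\cdots\otimes e_k$. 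Its projective norm I would compute by duality against $(\ell_{p'}\widehat{\otimes}_{\pi}\cdots\widehat{\otimes}_{\pi}\ell_{p'})'=\mathcal L(^n\ell_{p'})$: the diagonal tensor only pairs with the diagonal coefficients $\beta(k)=\psi(e_k,\dots,e_k)$ of a form $\psi$, and $\psi\mapsto\psi_\beta$ is a norm-one projection (compose, for $i=1,\dots,n-1$, the averages over $\theta\in\mathbb T^{\mathbb N}$ of $\psi(\dots,\theta\cdot x_i,\overline\theta\cdot x_{i+1},\dots)$, each an average of isometric reparametrizations that forces $k_i=k_{i+1}$). Hence $\|\phi_\alpha\|_{\mathcal N}=\|\alpha\|_{(\ell_n(\mathcal L,p'))'}=\|\alpha\|_{\ell_{\max(p'/n,1)}}$, so $\ell_n(\mathcal N,p)\overset{1}{=}\ell_{\max(p'/n,1)}$ (which is $\ell_1$ for $p\ge2$); and since $\ell_p$ is Asplund, $\mathcal I=\mathcal N$ by \cite{Al85}, giving the second columns of (2) and (3).

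\emph{The endpoints $p=1,\infty$.} For $p=\infty$ all three lower ideals collapse to $\ell_1$: one has $\ell_n(\mathcal L,\infty)=\ell_1$, while $\alpha\in\ell_1$ makes $\phi_\alpha$ nuclear via $\sum_k\alpha(k)\,e_k^*\otimes\cdots\otimes e_k^*$ with $e_k^*\in\ell_1=\ell_{p'}$, so \eqref{chain} collapses and (4) follows. For $p=1$ I would use the cited fact that on $\ell_1$ every continuous form is integral, together with \eqref{chain}, to get $\ell_n(\mathcal I,1)=\ell_n(\mathcal E,1)=\ell_n(\mathcal L,1)=\ell_\infty$. For the nuclear space I would prove $\ell_n(\mathcal N,1)\overset{1}{=}c_0$ in two steps: for finitely supported $\alpha$, a roots-of-unity (character) decomposition of the diagonal tensor in $\ell_\infty\widehat{\otimes}_{\pi}\cdots\widehat{\otimes}_{\pi}\ell_\infty$ gives $\|\phi_\alpha\|_{\mathcal N}=\|\alpha\|_\infty$, so truncation yields $c_0\subseteq\ell_n(\mathcal N,1)$ isometrically; conversely, if $\phi_\alpha$ is nuclear then $U\colon\ell_1\to\mathcal L(^{n-1}\ell_1)$, $x_1\mapsto\phi_\alpha(x_1,\cdot,\dots,\cdot)$, is compact, while $\|Ue_k-Ue_l\|=\max(|\alpha(k)|,|\alpha(l)|)$, so an $\alpha\notin c_0$ would give a $\delta$-separated sequence $(Ue_k)$, a contradiction. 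This proves (1) and Table~\ref{tablanucint}.

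\emph{The extendible forms: the crux.} Here I would establish the $n$-independent identity $\ell_n(\mathcal E,p)=\ell_{\max(p'/2,1)}$ for $1<p<\infty$, the exponent $2$ reflecting the cotype $2$ of $\ell_p$ for $p\le2$. The case $p\ge2$ (where this reads $\ell_1=\mathcal N=\mathcal I$) and the bilinear case $n=2$ (where $\mathcal E=\mathcal N=\ell_{p'/2}$) are already available from \cite{Ca01,CaDiSe}, so the new content is $\ell_n(\mathcal E,p)=\ell_{p'/2}$ for $1<p<2$ and $n\ge3$. For necessity I would extend $\phi_\alpha$ to $\widetilde\phi\in\mathcal L(^nC(K))$ with $K=(B_{\ell_{p'}},w^*)$ and $\|\widetilde\phi\|\le\|\phi_\alpha\|_{\mathcal E}$, and test on images of sign vectors $y_i=\sum_k\varepsilon_i(k)t(k)e_k$; freezing $n-2$ slots is too lossy, so instead I would apply the bilinear Grothendieck theorem to two slots of $\widetilde\phi$ and Khinchine's inequality to the resulting i.i.d.\ Rademacher products on the diagonal, converting (via cotype $2$ of $\ell_p$) the sup-norm bounds into $\|\alpha\|_{p'/2}\lesssim\|\phi_\alpha\|_{\mathcal E}$. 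For sufficiency I would factor $\alpha=\beta\delta$ with $\beta,\delta\in\ell_{p'}$ (possible exactly because $1/(p'/2)=1/p'+1/p'$) and build an explicit \emph{bounded but non-integral} extension of $\phi_\alpha$ to $C(K)$ from a Gaussian/Hilbertian bilinear core governed by $\beta,\delta$.

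Finally I would assemble the strict inclusions by comparing exponents: for $1<p<2$ and $n\ge3$ one has $\max(p'/n,1)<p'/2<\infty$, giving $\mathcal N=\mathcal I\subsetneq\mathcal E\subsetneq\mathcal L$; for $n=2$ the first inclusion becomes the equality $\ell_{p'/2}$ while $\mathcal E\subsetneq\mathcal L=\ell_\infty$ remains strict; for $p\ge2$ one has $\ell_1\subsetneq\ell_n(\mathcal L,p)$; and for $p=1$ the separation is $c_0\subsetneq\ell_\infty$. The main obstacle is the sufficiency half of the extendibility statement: producing, for every $\alpha\in\ell_{p'/2}$ and every $n\ge3$, a genuinely non-integral bounded extension to $C(K)$. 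The crude candidates (point evaluations at the $e_k$, or any product-of-measures representation) only reach $\ell_1$ or $\ell_{\max(p'/n,1)}$, so the construction must exploit the cotype-$2$ geometry of $\ell_p$ through a Gaussian or stable-type randomization to beat that barrier while keeping the extension bounded on all of $C(K)$; controlling this norm uniformly in $n$ is the delicate step.
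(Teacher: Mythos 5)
Most of your proposal is sound, and on the routine parts it is actually more self-contained than the paper, which simply cites \cite{Ca01,CaDiSe} for the descriptions of $\ell_n(\mathcal L,p)$, $\ell_n(\mathcal N,p)$, $\ell_n(\mathcal I,p)$, the case $p\ge 2$, the bilinear case, and the endpoints $p=1,\infty$; your H\"older computation, the projective-tensor/diagonal-projection argument giving $\ell_n(\mathcal N,p)=\ell_{\max(p'/n,1)}$, the Asplund reduction $\mathcal I=\mathcal N$, and the $p=1$ compactness argument are all correct in outline. The trouble is that the single assertion which is genuinely new in this theorem --- the equality $\ell_n(\mathcal E,p)=\ell_{p'/2}$ for $1<p<2$ and $n\ge 3$, i.e.\ Proposition~\ref{pprimasobre2} --- is exactly where your argument stops being a proof. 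Even for the necessity half your sketch is shaky: whatever you do with two slots of $\widetilde\phi$ via bilinear Grothendieck, the remaining $n-2$ slots must be occupied by vectors whose $\ell_p$-norms enter the estimate, and this produces an exponent that degrades with $n$ (for $n$ odd, Khinchine on the diagonal recovers only $\ell_u$ with $\frac1u=\frac2{p'}-\frac{n-2}{p}$, not $\ell_{p'/2}$). The paper sidesteps this by quoting the interpolation theorem of \cite{BMP} that every extendible $n$-linear form is absolutely $(r;2r)$-summing for all $r\ge 1$, taking $r=p'/2$ and testing on the unit basis, where $w_{p'}\left((e_k)_{k=1}^N\right)=1$; your sketch could be repaired by citing that result, but as written it does not close.

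The sufficiency half is a genuine gap, and you say so yourself (``the main obstacle'', ``the delicate step''). You correctly isolate the reduction --- writing $\alpha=\beta\delta$ with $\beta,\delta\in\ell_{p'}$ is precisely the paper's $\sigma=\alpha^{1/2}$ with $D_\sigma:\ell_p\to\ell_1$ acting on two coordinates --- but the ``Gaussian/Hilbertian bilinear core'' extension is never constructed, and your intuition about what is required is off in two respects. First, the extension need not be ``non-integral'' in any structural sense: extendibility asks only for \emph{some} bounded extension to a $C(K)$, and the non-integrality of $\phi_\alpha$ is automatic from the nuclear/integral computation, so nothing of that kind has to be engineered. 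Second, no randomization or cotype argument is needed in this direction at all. After the factorization, what remains is to show that the unweighted diagonal form $\Phi(x_1,\dots,x_n)=\sum_k x_1(k)\cdots x_n(k)$ is extendible with norm one on $\ell_1\times\ell_1\times\ell_p\times\cdots\times\ell_p$ (Lemma~\ref{ejemplo todas extendibles}), and the paper does this by a deterministic, finite-dimensional identity: for a symmetric Walsh/Fourier matrix, the operators $\xi_N:\ell_1\to\ell_\infty^N$ of \eqref{operador csi} have norm at most one, the Bohnenblust--Hille form $L_N$ of \cite{BoHi} satisfies $\|L_N\|_{\mathcal E(^n\ell_\infty^N)}=\|L_N\|_{\mathcal L(^n\ell_\infty^N)}=N^2$ (extendible and uniform norms coincide on the injective space $\ell_\infty^N$), and $L_N(\xi_N x_1,\xi_N x_2,x_3,\dots,x_n)=N^2\sum_{k=1}^N x_1(k)\cdots x_n(k)$, whence $\|\Phi_N\|_{\mathcal E}\le 1$ uniformly in $N$ and $n$, and the density lemma of \cite{CaDiSe3} concludes. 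This composition of the Bohnenblust--Hille form with Walsh-matrix operators to ``diagonalize'' is the missing idea; without it, or a genuine substitute, item (2) of the theorem for $n\ge 3$ remains unproved.
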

The description of $\ell_{n} (\mathcal{L},p)$ follows easily from H\"older's inequality. Also, the characterizations of $\ell_{n}(\mathcal{N},p)$
and $\ell_{n} (\mathcal{I},p)$ and most of the relations with $\ell_{n} (\mathcal{E},p)$ where already proved in \cite{Ca01,CaDiSe}, but we have decided to bring
them all here to have a more complete picture. The only thing we need to prove is, in item \emph{(2)},  the equality $\ell_n(\mathcal E,p)=\ell_{\frac{p'}{2}}$.
This will be showed in Proposition \ref{pprimasobre2} below.
We remark that the case $1<p<2$ shows an important difference between bilinear forms and $n$-linear forms with $n\ge 3$. In fact, a consequence of a deep result of Pisier \cite{Pisier} is that every extendible bilinear form defined on a Banach space with cotype 2 is integral
(see  \cite{Ca01,CaGaJa01}, where Pisier's result is ``read'' in a fashion more akin to our framework). In particular, this holds for bilinear forms on $\ell_p$ with $1\le p\le 2$. However, by the previous theorem, for $n$-linear forms with $n\ge 3$ and every $1<p<2$ we  have diagonal extendible $n$-linear forms which are not integral.

For the following result, we need to define some mappings that will be used again later.
We consider a Walsh matrix; i.e., a matrix $(a_{kr})_{kr=1}^N$ such that $\vert a_{kr} \vert = 1$ and
\[
\sum_{r=1}^N a_{rk} \bar a_{rl}=N\ \delta_{k,l} \, .
\]
Assume also that the matrix is symmetric. Examples of such matrices in the complex case are the Fourier matrices, given by $a_{kr}=e^{\frac{2 \pi i}{N} rk}$ \cite[Section 8.5]{LibroDeFl93}. For the real
case one can consider Hadamard matrices (see \cite[Section 3]{MaQu10}) defined for $N$ that are powers of $2$ and that are generated by blocks as
follows
\[
 A_{2} = \begin{pmatrix}
          1 & 1 \\
	  1 & -1
         \end{pmatrix}
\, , \, \,
 A_{2^{n+1}} = \begin{pmatrix}
          A_{2^{n}} & A_{2^{n}} \\
	  A_{2^{n}} & -A_{2^{n}}
         \end{pmatrix}.
\]
We define a Toeplitz-like operator $\xi_N:\ell_p^N\to \ell_\infty ^N$ by
\begin{equation}\label{operador csi}
\xi_N(x)=\sum_{k=1}^N\left(\sum_{r=1}^N \bar a_{kr} x(r)\right)e_k.
\end{equation}
This operator satisfies $\Vert \xi_N:\ell_p^N\to\ell_\infty^N \Vert \leq \Vert \id : \ell_{p}^{N} \to \ell_{1}^{N} \Vert \,  \Vert \xi_{N} : \ell_{1}^{N} \to \ell_{\infty}^{N} \Vert \leq N^{\frac 1 {p'}}$.
Finally, for $n\ge 3$ we introduce the following modification of the $n$-linear form on $\ell_\infty^N$ studied by Bohnenblust and Hille in \cite[Section~2]{BoHi}:
\begin{equation}\label{trilineal L}
L_N(x_1,\dots,x_n)=\sum_{j,k,l=1}^{N}   a_{jl} a_{lk} x_1(j) x_2(k) x_3(l)\cdots x_n(l).
\end{equation}
Actually, this $n$-linear form is built taking the trilinear mapping defined by Bohnenblust and Hille and giving an $n$-linear extension of this definition maintaining
the norm of the original trilinear form. Since on $\ell_\infty^N$ the extendible and the usual norms coincide, we have by \cite[Section~2]{BoHi}
\begin{equation}\label{B-H}
\|L_N\|_{_{\mathcal E( ^n\ell_\infty^N)}}=\|L_N\|_{_{\mathcal L(^n \ell_\infty^N)}}=N^2 \, .
\end{equation}

\noindent Thanks to the properties of the coefficients $(a_{kr})_{kr}$  we have:
\begin{eqnarray}
L_N (\xi_N(x_1),\xi_N(x_2),x_3,\dots ,x_n  ) & = &  \sum_{j,k,l=1}^{N}   a_{jl} a_{lk} \cdot \xi_N(x_1)(j) \cdot \xi_N(x_2)(k)\cdot x_3(l)\cdots x_n(l)  \nonumber\\
&=&  \sum_{r,s,l=1}^{N}     x_1(r)  x_2(s) x_3(l)\cdots x_n(l)  \sum_{j=1}^{N}  a_{jl} \bar a_{jr} \sum_{k=1}^{N}   a_{lk} \bar a_{ks}\nonumber\\
&=& \sum_{r,s,l=1}^{N}     x_1(r)  x_2(s) x_3(l)\cdots x_n(l) N \delta_{l,r} N\delta_{l,s}\nonumber\\
&=& N^2 \sum_{r=1}^{N}     x_1(r)  x_2(r) x_3(r)\cdots x_n(r) \, .\label{L_N con csi_N}
\end{eqnarray}

\begin{lemma}\label{ejemplo todas extendibles}
Every diagonal $n$-linear form on $\ell_1\times \ell_1\times\ell_{p_1} \times\cdots\times \ell_{p_{n-2}} $ with $1\le p_i\le \infty$ and $n\ge 2$, is extendible.
\end{lemma}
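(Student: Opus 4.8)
The plan is to realise the truncations of $\phi_\alpha$ as compositions of the Bohnenblust--Hille form $L_N$ with suitable operators into $\ell_\infty^N$, exploiting that the first two variables live in $\ell_1$, and then to remove the truncation by a compactness argument. The role of the hypothesis $p_1=p_2=1$ is precisely that $\Vert \xi_N:\ell_1^N\to\ell_\infty^N\Vert\le 1$ (the stated bound $\Vert\xi_N:\ell_p^N\to\ell_\infty^N\Vert\le N^{1/p'}$ at $p=1$), so that the factor $N^2=\Vert L_N\Vert$ is not amplified; for $p>1$ this factor would blow up with $N$, which is exactly why a single $\ell_1$ factor does not suffice.

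First I would fix $N$ and absorb the multiplier $\alpha$. Writing $D_\alpha\colon\ell_1^N\to\ell_1^N$ for the diagonal operator $D_\alpha x=(\alpha(k)x(k))_k$, one has $\Vert D_\alpha\Vert\le\Vert\alpha\Vert_\infty$, and replacing $x_1$ by $D_\alpha x_1$ in \eqref{L_N con csi_N} yields
\[
L_N\bigl(\xi_N(D_\alpha x_1),\xi_N(x_2),x_3,\dots,x_n\bigr)=N^2\sum_{r=1}^N\alpha(r)\,x_1(r)x_2(r)x_3(r)\cdots x_n(r).
\]
Denoting by $\pi_N$ the truncation onto the first $N$ coordinates and by $\phi_\alpha^N$ the diagonal form with coefficients $(\alpha(1),\dots,\alpha(N),0,\dots)$, this gives the factorization $\phi_\alpha^N=\tfrac1{N^2}\,L_N\circ(B_1,\dots,B_n)$ with $B_1=\xi_N D_\alpha\pi_N$, $B_2=\xi_N\pi_N$ and $B_i=\pi_N$ (viewed into $\ell_\infty^N$) for $i\ge 3$. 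Here $\Vert B_1\Vert\le\Vert\alpha\Vert_\infty$ and $\Vert B_2\Vert\le 1$, the estimate on $B_2$ being where $p_2=1$ is used; the factors $x_3,\dots,x_n$ enter $L_N$ only through the norm-one inclusions $\ell_{p_i}^N\hookrightarrow\ell_\infty^N$, which is why the $p_i$ may be arbitrary.

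Next, given arbitrary Banach spaces $X_1,X_2\supseteq\ell_1$ and $X_{i}\supseteq\ell_{p_{i-2}}$, I would extend each $B_i$ to $\bar B_i\colon X_i\to\ell_\infty^N$ of the same norm, which is possible since the finite-dimensional space $\ell_\infty^N$ is $1$-injective (equivalently, extend each coordinate functional by Hahn--Banach). As the $\bar B_i$ take values in $\ell_\infty^N$, the very domain of $L_N$, the form $\widetilde{\phi_\alpha^N}:=\tfrac1{N^2}L_N\circ(\bar B_1,\dots,\bar B_n)$ is a well-defined element of $\mathcal L(^nX_1,\dots,X_n)$ restricting to $\phi_\alpha^N$ on $\ell_1\times\ell_1\times\ell_{p_1}\times\cdots\times\ell_{p_{n-2}}$, and by \eqref{B-H},
\[
\Vert\widetilde{\phi_\alpha^N}\Vert\le\frac1{N^2}\,\Vert L_N\Vert\,\Vert B_1\Vert\cdots\Vert B_n\Vert\le\frac1{N^2}\cdot N^2\cdot\Vert\alpha\Vert_\infty=\Vert\alpha\Vert_\infty .
\]
Hence each truncation is extendible with $\Vert\phi_\alpha^N\Vert_{\mathcal E}\le\Vert\alpha\Vert_\infty$, uniformly in $N$.

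Finally I would remove the truncation. For the fixed spaces $X_i$, the extensions $\widetilde{\phi_\alpha^N}$ form a bounded sequence in $\mathcal L(^nX_1,\dots,X_n)=(X_1\widehat\otimes_\pi\cdots\widehat\otimes_\pi X_n)^*$; by Banach--Alaoglu it admits a weak-$*$ cluster point $\Phi$ with $\Vert\Phi\Vert\le\Vert\alpha\Vert_\infty$. Testing against elementary tensors and using that $\phi_\alpha^N\to\phi_\alpha$ pointwise on the domain (the defining series converges absolutely, as $\alpha\in\ell_\infty$), every cluster value of $\widetilde{\phi_\alpha^N}(x_1,\dots,x_n)=\phi_\alpha^N(x_1,\dots,x_n)$ equals $\phi_\alpha(x_1,\dots,x_n)$, so $\Phi$ extends $\phi_\alpha$ to $X_1\times\cdots\times X_n$. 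As the $X_i$ are arbitrary, $\phi_\alpha$ is extendible (with $\Vert\phi_\alpha\Vert_{\mathcal E}\le\Vert\alpha\Vert_\infty$). I expect this last passage to the limit to be the main obstacle: one must check that the uniform bound on the truncations really upgrades to extendibility of $\phi_\alpha$ itself and that the cluster point extends $\phi_\alpha$ rather than some truncation, which is exactly what the weak-$*$ compactness together with pointwise convergence secures.
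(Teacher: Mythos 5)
Your construction is essentially the paper's own proof: the same operator $\xi_N$, the same Bohnenblust--Hille form $L_N$, the same identity \eqref{L_N con csi_N} and bound \eqref{B-H}, and the same observation that everything factors through $\ell_\infty^N$, whose $1$-injectivity keeps the extendible norms of the truncations uniformly bounded. Your two deviations are harmless. First, you keep the multiplier $\alpha$ inside the factorization (via $B_1=\xi_N D_\alpha\pi_N$), whereas the paper strips it off at the outset through the diagonal operator $D_\alpha\colon\ell_1\to\ell_1$ and the ideal property of $\mathcal E$, reducing to the form $\Phi$ with coefficients all equal to $1$; both yield the bound $\|\alpha\|_\infty$. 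Second, where the paper passes from the uniform bound on the truncations to extendibility of the full form by citing a multilinear density lemma (\cite[Lemma 5.4]{CaDiSe3}), you prove that passage directly: fix superspaces $X_i$, extend each truncation with the same norm, take a weak-$*$ cluster point in $\bigl(X_1\widehat{\otimes}_\pi\cdots\widehat{\otimes}_\pi X_n\bigr)^*$, and use that $\phi_\alpha^N(x_1,\dots,x_n)\to\phi_\alpha(x_1,\dots,x_n)$ (absolute convergence of the series, since $x_1\in\ell_1$ and the remaining coordinates are bounded) to conclude that the cluster point restricts to $\phi_\alpha$. This argument is correct, and it makes the proof self-contained where the paper relies on a citation.

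The one point you do not cover is $n=2$, which the statement includes. Your argument needs $L_N$, and both its definition \eqref{trilineal L} and the norm evaluation \eqref{B-H} are given in the paper only for $n\ge 3$. The paper settles $n=2$ in one line: diagonal bilinear forms on $\ell_1\times\ell_1$ are integral, hence extendible. Alternatively, the bilinear analogue $L_N(x_1,x_2)=\sum_{j,k,l}a_{jl}a_{lk}\,x_1(j)x_2(k)$ still satisfies $\|L_N\|_{\mathcal L(^2\ell_\infty^N)}\le N^2$ (Cauchy--Schwarz in $l$ plus the row orthogonality), and the analogue of \eqref{L_N con csi_N} holds verbatim, so your scheme adapts; but some such remark, or the paper's integrality shortcut, is needed to close this case.
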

\begin{proof} The case $n=2$ is immediate: diagonal bilinear forms on $\ell_1\times \ell_1$ are integral, and therefore extendible.
For $n\ge 3$, any diagonal $n$-linear form $\phi_\alpha:\ell_1\times\ell_1\times\ell_{p_1}\times\cdots\times\ell_{p_{n-2}}\to \mathbb K$ has a factorization as
$$
\xymatrix@C1.5mm{
\ell_1 \ar[d]_{D_\alpha} & \times & \ell_1 \ar[d]_i &\times & \ell_{p_1}\ar[d]_i &\times& \cdots & \times & \ell_{p_{n-2}}\ar[d]_i   \ar[rr]^{\phi_\alpha} & & \mathbb K \\
\ell_1&  \times &  \ell_1&\times& \ell_{p_1}&\times & \cdots & \times & \ell_{p_{n-2}}\ar[urr]_{\Phi}
},
$$
where  $D_\alpha:\ell_1\to\ell_1$ is the diagonal operator given by $D_{\alpha} ((x(k))_{k}) = (\alpha(k) x(k))_{k}$ and
\begin{equation} \label{def:fi}
\Phi(x_1,\dots,x_n)=\sum_{k=1}^\infty x_1(k)\cdots x_n(k).
\end{equation}
Then by the ideal property of extendible multilinear forms, it is enough to show that the $n$-linear form
$\Phi$ is extendible on $\ell_1\times \ell_1\times\ell_{p_1} \times\cdots\times \ell_{p_{n-2}} $.
We denote by $\Phi_{N}$ the form defined by summing the first $N$ terms in \eqref{def:fi}.

We consider the operator $\xi_N$ defined in (\ref{operador csi}) with domain $\ell_1$ (thus $\|\xi_N\|\le 1$) and the $n$-linear form $L_N$  as in (\ref{trilineal L}).
As a consequence of \eqref{L_N con csi_N} and \eqref{B-H}  we have
\begin{eqnarray*}
  \|\Phi_N\|_{_{\mathcal{E}(^n\ell_1\times \ell_1\times\ell_{p_1} \times\cdots\times \ell_{p_{n-2}})}} &\le &\frac 1{N^2 } \|L_N\|_{_{\mathcal E( ^n\ell_\infty^N)}} \|\xi_N\|^2 \ \prod_{i=1}^{n-2}\|\id :\ell_{p_i}^N\to\ell_\infty^N\|\ \\
 &\le & \frac 1{N^2 } N^2 = 1 \, .
\end{eqnarray*}
Therefore, the extendible norms of the $n$-linear forms $\Phi_N$ ($N\in \mathbb N$) are uniformly bounded. A multilinear  version of the density lemma \cite[Section~13.4]{LibroDeFl93}, stated in \cite[Lemma 5.4]{CaDiSe3}, implies that $\Phi$ is extendible (and its extendible norm is one).
\end{proof}

The previous lemma is meant to be a tool which will be used in the next proposition and also in the following section. However, as a byproduct of this lemma, a rather unexpected behaviour of diagonal multilinear forms is illustrated. It is easy to see that, if \textit{every} $n$-linear form
on $E_{1} \times \cdots \times E_{n}$ ($E_{j}$ Banach spaces) is extendible, then every $(n-1)$-linear form on any $(n-1)$-tuple of
the previous spaces is extendible. This is a property shared by most classes of multilinear forms, and is related to the notions of ``Property B'', ``coherence'' and ideals ``closed under differentiation'' developed in \cite{BoBrJuPe,BoPe05,CarDimMur09}.
However, this is no longer the case when we restrict ourselves to diagonal multilinear forms. Indeed, Lemma~\ref{ejemplo todas extendibles} shows that every
diagonal trilinear form on $\ell_{1} \times \ell_{1} \times \ell_{2}$ is extendible, while we know from \cite{CaSe2} that there are non-extendible diagonal bilinear forms on $\ell_{1} \times \ell_{2}$.

\medskip
In order to prove the next proposition, recall that, for $s\leq rn$, an $n$-linear form $\phi\in\mathcal L(^n \ep)$ is said to be \textbf{absolutely $(r;s)$-summing} \cite{AM,Ma} if there exists a constant $K>0$ such that for every $x_j^{(k)}\in \ep$, $j=1,\dots , n$, $k=1,\dots , N$,
$$
\left(\sum_{k=1}^N |\phi(x_1^{(k)},\dots x_n^{(k)})|^r\right)^{\frac{1}{r}}\le K\cdot w_s\left((x_1^{(k)})_{k=1}^N\right)\cdots w_s\left((x_n^{(k)})_{k=1}^N\right),
$$
where the weakly $s$-summing norm $w_s$ of the sequence $(x_j^{(k)})_{k=1}^N$ is given by
$$
w_s\left((x_j^{(k)})_{k=1}^N\right)=\sup_{\gamma\in B_{\ell_{p'}}} \left(\sum_{k=1}^N |\gamma(x_j^{(k)})|^s\right)^{\frac1{s}}\, .
$$ There are several possible extensions of the notion of absolutely summing operators to the multilinear setting \cite{CaPe,PG05}. Although the one given above is not considered, in some sense, a good generalization of the linear concept, it will prove useful for our purposes.
Extendible multilinear mappings are those that can be factored through an $\mathcal L_\infty$ space
(due to the injectivity of these spaces). Thus, Grothendieck's multilinear
inequality allows us to derive (see \cite{B88} or \cite[Corollary  2.5]{PG}) that any extendible $n$-linear
form is absolutely $(1;2)$-summing
(for Grothendieck inequality and the notions of cotype and absolutely summing linear operators we refer the
reader to the classical book \cite{DiJaTo95}). From this result and an interpolation technique,
a stronger statement is obtained in \cite[Theorem 3.15]{BMP}: all extendible $n$-linear forms are
absolutely $(r;2r)$-summing, for any $r\ge 1$.

\begin{proposition} \label{pprimasobre2}
Let $1<p<2$ and $n\ge 2$. Then, $\ell_n(\mathcal E,p)= \ell_{\frac{p'}{2}}$.
\end{proposition}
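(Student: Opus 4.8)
The plan is to prove the equality $\ell_n(\mathcal E,p)=\ell_{p'/2}$ by establishing the two inclusions separately. Recall that the statement asserts an isometric/isomorphic identification of Banach sequence spaces, so I should track norms (or at least uniform constants) throughout.

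\smallskip

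\emph{The inclusion $\ell_n(\mathcal E,p)\subseteq \ell_{p'/2}$.} For this direction I would exploit the summability result quoted just before the proposition: every extendible $n$-linear form is absolutely $(1;2)$-summing, or more flexibly, absolutely $(r;2r)$-summing for every $r\ge 1$ \cite[Theorem 3.15]{BMP}. Let $\alpha\in\ell_n(\mathcal E,p)$, so $\phi_\alpha$ is extendible. The idea is to feed the summing inequality the canonical vectors: test with $x_j^{(k)}=e_k$ for all $j=1,\dots,n$ and $k=1,\dots,N$. Then $\phi_\alpha(e_k,\dots,e_k)=\alpha(k)$, while the weakly $s$-summing norm of $(e_k)_{k=1}^N$ in $\ell_p$ is $w_s((e_k)_k)=\sup_{\gamma\in B_{\ell_{p'}}}\big(\sum_k|\gamma(k)|^s\big)^{1/s}$, which for $s=p'$ equals $1$. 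So I want to choose $r$ with $2r=p'$, i.e. $r=p'/2$ (legitimate since $p<2$ gives $p'>2$, so $r\ge 1$). The summing inequality then reads $\big(\sum_{k=1}^N|\alpha(k)|^{p'/2}\big)^{2/p'}\le K\cdot w_{p'}((e_k)_k)^n=K$, uniformly in $N$, yielding $\alpha\in\ell_{p'/2}$ with control of the norm by $\|\phi_\alpha\|_{\mathcal E}$. \textbf{This direction is the soft one} and should go through essentially as described, modulo pinning down the exact constant in the $(r;2r)$-summing inequality to get the isometry rather than just an isomorphism.

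\smallskip

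\emph{The inclusion $\ell_{p'/2}\subseteq \ell_n(\mathcal E,p)$.} This is where the machinery set up before the proposition — the Walsh matrix, the operator $\xi_N$ from \eqref{operador csi}, and the Bohnenblust--Hille-type form $L_N$ from \eqref{trilineal L} — is meant to be used, exactly as in the proof of Lemma~\ref{ejemplo todas extendibles}. Given $\alpha\in\ell_{p'/2}$, I want to bound the extendible norm of the truncated diagonal forms $\phi_{\alpha,N}$ uniformly and then invoke the same multilinear density lemma \cite[Lemma 5.4]{CaDiSe3} to conclude that $\phi_\alpha$ is extendible. The key computation \eqref{L_N con csi_N} shows that composing $L_N$ with two copies of $\xi_N$ produces $N^2$ times the diagonal form. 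To incorporate the coefficients $\alpha$, I would insert a diagonal operator $D_{\beta}:\ell_p^N\to\ell_p^N$ (for a suitable $\beta$ related to $\alpha$) before one of the $\xi_N$'s, or absorb $\alpha$ into the estimate, so that $\phi_{\alpha,N}$ factors as $\frac{1}{N^2}L_N$ precomposed with $\xi_N\circ(\text{diagonal})$, $\xi_N$, and identities. Using the ideal property of $\mathcal E$, the factorization norm is then controlled by
\[
\|\phi_{\alpha,N}\|_{\mathcal E}\le \frac{1}{N^2}\,\|L_N\|_{\mathcal E(^n\ell_\infty^N)}\cdot\|\xi_N\circ D_\beta\|\cdot\|\xi_N\|\cdot\prod(\cdots),
\]
and since $\|L_N\|_{\mathcal E}=N^2$ by \eqref{B-H}, everything reduces to showing the remaining operator norms stay uniformly bounded precisely when $\alpha\in\ell_{p'/2}$.

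\smallskip

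\textbf{The main obstacle} I expect is the bookkeeping in this second inclusion: getting the right exponent $p'/2$ to emerge from the norm estimate of $\xi_N$ composed with the diagonal operator carrying $\alpha$. The operator $\xi_N:\ell_p^N\to\ell_\infty^N$ has norm $\le N^{1/p'}$, and there are two copies of it contributing a factor $N^{2/p'}$; this must be balanced against the $1/N^2$ from $L_N$ and against the $\ell_{p'/2}$-norm of the coefficients distributed across the $N$ coordinates. The delicate point is arranging the factorization so that the block of coefficients on $\{1,\dots,N\}$ contributes exactly $\big(\sum_{k=1}^N|\alpha(k)|^{p'/2}\big)^{2/p'}$ and the $N$-powers cancel — this is the heart of why the threshold is $p'/2$ and not some other exponent. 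I would double-check the cancellation on a small case ($n=3$) before trusting it in general, and verify that the form $L_N$ (defined for $n\ge 3$) covers the bilinear case $n=2$ separately, since for $n=2$ the claim $\ell_2(\mathcal E,p)=\ell_{p'/2}$ already follows from item (2) of Theorem~\ref{main forms} via Pisier's theorem (extendible $=$ integral on cotype-2 spaces), so the genuinely new content is $n\ge 3$.
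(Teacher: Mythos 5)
Your first inclusion, $\ell_n(\mathcal E,p)\subseteq\ell_{p'/2}$, is exactly the paper's argument: extendible forms are absolutely $(p'/2;p')$-summing by \cite[Theorem 3.15]{BMP}, and testing on the canonical vectors (whose weak $\ell_{p'}$-norm is $1$) gives a uniform bound on $\big(\sum_{k\le N}|\alpha(k)|^{p'/2}\big)^{2/p'}$. No issue there.

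The second inclusion, however, has a genuine gap, and your specific plan would fail. You propose to insert a single diagonal operator $D_\beta:\ell_p^N\to\ell_p^N$ before \emph{one} of the two copies of $\xi_N$. Two problems: (a) the other copy of $\xi_N$ still acts on $\ell_p^N$ and costs $N^{1/p'}$, and since $\|L_N\|_{\mathcal E}=N^2$ exactly cancels the prefactor $1/N^2$, any leftover positive power of $N$ ruins the uniform bound; (b) keeping the intermediate space $\ell_p^N$ wastes the only good feature of $\xi_N$, namely that it has norm one \emph{on $\ell_1^N$}. Even the repaired one-sided version --- all of $\alpha$ on one coordinate via $D_\alpha:\ell_p\to\ell_1$ followed by $\xi_N:\ell_1^N\to\ell_\infty^N$ --- yields $\|\alpha\|_{\ell_{p'}}\cdot N^{1/p'}$, still unbounded. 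The missing idea (the ``heart'' you flagged but did not supply) is a square-root splitting: write $\alpha(k)=\sigma(k)^2$ with $\sigma=\alpha^{1/2}$, so that $\sigma\in\ell_{p'}$ if and only if $\alpha\in\ell_{p'/2}$, and place $D_\sigma:\ell_p\to\ell_1$ before \emph{each} of the first two coordinates. Then both copies of $\xi_N$ act on $\ell_1^N$ with norm one, the product of diagonal entries reconstructs $\alpha(k)$, and the estimate closes with constant $\|D_\sigma\|^2\le\|\sigma\|_{\ell_{p'}}^2=\|\alpha\|_{\ell_{p'/2}}$; this is precisely where the exponent $p'/2$ comes from. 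The paper in fact packages this more cleanly: it does not redo the $L_N$/density-lemma computation at all, but factors $\phi_\alpha$ through the norm-one extendible form $\Phi$ on $\ell_1\times\ell_1\times\ell_p\times\cdots\times\ell_p$ provided by Lemma~\ref{ejemplo todas extendibles} (where all the Walsh-matrix work lives), via $D_\sigma$ on the first two coordinates and inclusions on the rest. This also covers $n=2$ uniformly, making your separate appeal to Pisier's theorem unnecessary.
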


\begin{proof}
Let $\phi_\alpha\in \mathcal E(^n\ell_p)$ be a diagonal extendible $n$-linear form.
Since $\phi_\alpha$ is absolutely $(\frac{p'}{2};p')$-summing, there is a constant $K$ such that,  for all $N$,
$$
\left(\sum_{k=1}^N |\alpha_k|^{\frac{p'}{2}}\right)^{\frac{2}{p'}}=\left(\sum_{k=1}^N |\phi_\alpha( e_k,\dots, e_k)|^{\frac{p'}{2}}\right)^{\frac{2}{p'}}\le K\cdot w_{p'}\left(( e_k)_{k=1}^N\right)^n=K,
$$
which means  that $\alpha$ belongs to $\ell_{\frac{p'}{2}}$.

For the reverse inclusion, let $\alpha\in\ell_{\frac{p'}{2}}$ and define $(\sigma(k))_k=(\alpha(k)^{\frac12})_k$ (with the suitable modification in the real case). We consider the diagonal operator
$D_\sigma:\ell_p\to\ell_1$, given by $D_{\sigma} ((x(k))_{k}) = (\sigma(k) x(k))_{k}$ and we have the following commutative diagram
$$
\xymatrix@C1.5mm{
\ell_p \ar[d]_{D_\sigma} & \times & \ell_p \ar[d]_{D_\sigma} &\times & \ell_p\ar[d]_i &\times& \cdots & \times & \ell_p\ar[d]_i   \ar[rr]^{\phi_\alpha} & & \mathbb K \\
\ell_1&  \times &  \ell_1&\times& \ell_p&\times & \cdots & \times & \ell_p\ar[urr]_{\Phi}
}.
$$
Since $\Phi$ is extendible and has extendible norm one (by Lemma~\ref{ejemplo todas extendibles} and its proof) we obtain that $\phi_\alpha$ is extendible and
$$
\|\phi_\alpha\|_{_{\mathcal E(^n\ell_p)}}\le \|\Phi\|_{_{\mathcal{E}(^n\ell_1\times \ell_1\times\ell_{p}\times\cdots\times \ell_{p})}}\cdot \|D_\sigma\|^2=\|\alpha\|_{\ell_{\frac{p'}{2}}} \, . \qedhere $$
\end{proof}


\section{Diagonal multilinear operators}

In this section, we state and prove our main result. For $p,q,n$ fixed we define the following numbers, that
will be used along the rest of the paper:
\begin{gather}
 r = \Big( \frac{1}{q} - \frac{n}{p} \Big)^{-1} \nonumber \\
t = \max\left\{\left(\frac{n}{p'}+\frac{1}{q}\right)^{-1}, 1 \right\} \label{def:t}.
\end{gather}

\begin{theorem} \label{main} For $n\ge 1$, the following assertions hold.
$\ $
\begin{enumerate}
 \item Let $p=1$,
    \begin{enumerate}
     \item if $1\le q<\infty$, then
	$$
	\ell_n(\mathcal N,1,q)=\ell_n(\mathcal I,1,q)=\ell_q\subsetneq\ell_\infty= \ell_n(\mathcal E,1,q)=\ell_n(\mathcal L,1,q)\, ;
	$$
     \item if $q=\infty$, then
	$$
	\ell_n(\mathcal N,1,\infty)=c_0\subsetneq\ell_\infty= \ell_n(\mathcal I,1,\infty)= \ell_n(\mathcal E,1,\infty)=\ell_n(\mathcal L,1,\infty) \, .
	$$
    \end{enumerate}
  \item Let $1<p<2$,
    \begin{enumerate}
     \item if  $q=1$, then
	$$
	\ell_n(\mathcal N,p,1)=\ell_n(\mathcal I,p,1)=\ell_1\subsetneq \ell_{\frac{p'}{2}}=\ell_n(\mathcal E,p,1)\subsetneq\ell_\infty=\ell_n(\mathcal L,p,1) \, ;
	$$
      \item if $p'<q<\infty$, then
		    $$
	    \ell_n(\mathcal N,p,q)=\ell_n(\mathcal I,p,q)=\ell_t\subsetneq \ell_q=\ell_n(\mathcal E,p,q)\subsetneq\ell_\infty=\ell_n(\mathcal L,p,q)\, ;
	    $$
	  \item if $1<q\le p'$, then, for all $\varepsilon>0$,
		    $$
	    \ell_n(\mathcal N,p,q)=\ell_n(\mathcal I,p,q)=\ell_t\subsetneq \ell_q\subseteq\ell_n(\mathcal E,p,q)\subseteq\ell_{p'+\varepsilon}\subsetneq\ell_\infty=\ell_n(\mathcal L,p,q)\, ;
	    $$
      \item if $q=\infty$, then
	$$
	\ell_n(\mathcal N,p,\infty)=\ell_n(\mathcal I,p,\infty)=\ell_t\subsetneq\ell_\infty= \ell_n(\mathcal E,p,\infty)=\ell_n(\mathcal L,p,\infty) \, .
	$$
    \end{enumerate}
  \item Let  $2\le p<\infty$,
    \begin{enumerate}
     \item if $q=1$, then
	$$
	\ell_n(\mathcal N,p,1)=\ell_n(\mathcal I,p,1)=\ell_n(\mathcal E,p,1)=\ell_1\subsetneq \ell_n(\mathcal L,p,1)=\ell_\infty\text{ or }\ell_r \, ;
	$$
     \item if $1<q<\infty$, then
	$$
	\ell_n(\mathcal N,p,q)=\ell_n(\mathcal I,p,q)=\ell_1\subsetneq \ell_n(\mathcal E,p,q)=\ell_q\subsetneq\ell_n(\mathcal L,p,q)=\ell_\infty\text{ or }\ell_r \, ;
	$$
	      \item if  $q=\infty$, then
	$$
	\ell_n(\mathcal N,p,\infty)=\ell_n(\mathcal I,p,\infty)=\ell_1\subsetneq\ell_\infty= \ell_n(\mathcal E,p,\infty)=\ell_n(\mathcal L,p,\infty) \, .
	$$
    \end{enumerate}
  \item Let $p=\infty$
    \begin{enumerate}
     \item if $q=1$, then
	$$
	\ell_n(\mathcal N,\infty,1)=\ell_n(\mathcal I,\infty,1)=\ell_n(\mathcal E,\infty,1)=\ell_n(\mathcal L,\infty,1)=\ell_1 \, ;
	$$
      \item if  $1< q\le\infty$, then
	$$
	\ell_n(\mathcal N,\infty,q)=\ell_n(\mathcal I,\infty,q)=\ell_1\subsetneq\ell_q= \ell_n(\mathcal E,\infty,q)=\ell_n(\mathcal L,\infty,q) \, .
	$$
    \end{enumerate}
\end{enumerate}
\end{theorem}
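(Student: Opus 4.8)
The plan is to compute the four sequence spaces separately and then read off the fourteen sub-cases by comparing the exponents $1$, $t$, $q$, $r$ and $\infty$. The space $\ell_n(\mathcal L,p,q)$ is already given by H\"older's inequality in Section~1, so the real work is to identify $\ell_n(\mathcal N,p,q)=\ell_n(\mathcal I,p,q)$ and, above all, $\ell_n(\mathcal E,p,q)$. For $1<p<\infty$ the space $\ell_p$ is Asplund, so $\mathcal I=\mathcal N$ and only the nuclear side must be treated; at the endpoints $p\in\{1,\infty\}$ I would check $\mathcal I$ directly, the sole discrepancy being the classical one at $p=1$, $q=\infty$, where integral diagonal operators into $\ell_\infty$ fill $\ell_\infty$ while nuclear ones only fill $c_0$.

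To prove $\ell_n(\mathcal N,p,q)=\ell_t$, the upper bound (that $\alpha\in\ell_t$ forces $T_\alpha$ nuclear with $\|T_\alpha\|_{\mathcal N}\lesssim\|\alpha\|_{\ell_t}$) is the substantive half, since the naive coordinate representation only yields $\ell_1$. I would gain the exponent $t$ by a Walsh/Fourier decoupling: writing $e_r=\frac1N\sum_j\bar a_{jr}f_j$ with characters $f_j=(a_{js})_s$ and expanding each $x_i(r)$ in the same basis, one obtains on the first $N$ coordinates $T_\alpha(x_1,\dots,x_n)=\frac1{N^n}\sum_{k_1,\dots,k_n}\gamma_{k_1}(x_1)\cdots\gamma_{k_n}(x_n)\,f_{k_1+\cdots+k_n}$, where $\gamma_k=(\bar a_{ks})_s$ has $\|\gamma_k\|_{p'}=N^{1/p'}$ and $\|f_j\|_q=N^{1/q}$. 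Summing the $N^n$ terms gives nuclear norm at most $N^{n/p'+1/q}=N^{1/t}$, and a diagonal-multiplier pre-composition handles general $\alpha$. The reverse inclusion I would obtain by testing a nuclear representation against the canonical vectors (equivalently, by nuclear/bounded duality), recovering $1/t=n/p'+1/q$ capped at $1$.

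For extendibility, the inclusion $\ell_q\subseteq\ell_n(\mathcal E,p,q)$ holds for all $p,q$ and is the easy half: the canonical isometric embedding $\ell_p\hookrightarrow\ell_\infty(B_{\ell_{p'}})$ lets one extend $T_\alpha$ by $\widetilde T(F_1,\dots,F_n)=\sum_k\alpha(k)F_1(e_k^*)\cdots F_n(e_k^*)e_k$, which has norm $\le\|\alpha\|_{\ell_q}$ since $|F_i(e_k^*)|\le1$; this also settles the cases $q=\infty$. For $p=1$ one must reach all of $\ell_\infty$, and I would prove the operator analogue of Lemma~\ref{ejemplo todas extendibles} by replacing the scalar form \eqref{trilineal L} with the $\ell_q^N$-valued map $M_N(z_1,\dots,z_n)=\sum_r\big(\sum_{j,k}a_{jr}a_{rk}z_1(j)z_2(k)\big)z_3(r)\cdots z_n(r)\,e_r$: since $\eta\circ M_N$ is a Bohnenblust--Hille form for every $\eta\in B_{\ell_{q'}}$, one gets $\|M_N\|_{\mathcal E}\le N^2$ by \eqref{B-H}, and composing with $\xi_N$ as in \eqref{L_N con csi_N} plus the density lemma yields extendibility with uniformly bounded norm. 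For the upper bounds the clean tool is composition with a functional: if $T_\alpha$ is extendible then $\eta\circ T_\alpha=\phi_{\alpha\eta}$ is an extendible \emph{form} for every $\eta\in\ell_{q'}$, so by Theorem~\ref{main forms} and Proposition~\ref{pprimasobre2} one has $\sup_{\|\eta\|_{q'}\le1}\|\alpha\eta\|_{\ell_s}<\infty$ with $s=1$ for $p\ge2$ and $s=p'/2$ for $1<p<2$; by H\"older duality this equals $\|\alpha\|_{\ell_w}$ with $1/w=1/s-1+1/q$. For $p\ge2$ this gives the sharp $\mathcal E\subseteq\ell_q$, and for $1<p<2$, $q=1$ the sharp $\mathcal E\subseteq\ell_{p'/2}$; together with the sufficiency (for the latter, by factoring $T_\alpha$ through $D_\sigma\colon\ell_p\to\ell_1$ with $\sigma=\alpha^{1/2}$ and the model of Lemma~\ref{ejemplo todas extendibles}, mirroring Proposition~\ref{pprimasobre2}) these cases close.

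The remaining and hardest range is $1<p<2$ with $q>1$, where the scalar reduction is lossy, yielding only $\ell_w$ with $w>q$. Here I would pass to the $\mathcal L_\infty$-factorization of the extension and exploit the cotype of the range, raising the effective summing exponent from the form value $p'/2$ up to $\max(q,p')$; this is what should produce the sharp $\mathcal E\subseteq\ell_q$ when $p'<q$ (hence $\mathcal E=\ell_q$) and the weaker $\mathcal E\subseteq\ell_{p'+\varepsilon}$ when $1<q\le p'$. I expect this vector-valued step to be the main obstacle: the canonical unit vectors are not weakly $2$-summing in $\ell_p$ for $p<2$, so the naive $(q;2)$-summing estimate degenerates and one must work on the $p'$-summing scale adapted to $\ell_p$. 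Moreover, for $1<q\le p'$ the endpoint $p'$ cannot be reached by these methods, so the exact space sits strictly between $\ell_q$ and $\ell_{p'+\varepsilon}$ and is left undetermined; this is why the theorem only brackets $\ell_n(\mathcal E,p,q)$ there. Finally, all the strict-versus-equal relations in the fourteen sub-cases follow mechanically once the three spaces are identified, since $1/t=n/p'+1/q>1/q$ forces $t<q$ and the remaining exponents are ordered by inspection.
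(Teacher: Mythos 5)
Your overall architecture does track the paper's: $\mathcal L$ via H\"older, $\mathcal N=\mathcal I$ for $1<p<\infty$ via Asplundness, the lower bound $\ell_q\subseteq\ell_n(\mathcal E,p,q)$ via an injective superspace (Lemma~\ref{extendibles entre bss}), the case $p=1$ via the Bohnenblust--Hille form composed with Walsh matrices (Corollary~\ref{enele1todosext}), and the upper bounds for $p\ge 2$ and for $1<p<2$, $q=1$ by composing with functionals $\eta\in\ell_{q'}$ and invoking the scalar results (Propositions~\ref{extendible diagonal} and~\ref{ele-ene-exten}). But two steps, as written, have genuine gaps. First, your proof that $\alpha\in\ell_t$ forces $T_\alpha$ nuclear does not close. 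The decoupling gives $\|\Phi_N\|_{\mathcal N(^n\ell_p^N,\ell_q^N)}\le N^{1/t}$ only for the truncated \emph{unit} diagonal, and ``a diagonal-multiplier pre-composition'' cannot convert this into $\|T_\alpha\|_{\mathcal N}\lesssim\|\alpha\|_{\ell_t}$: pre-composing with multipliers $\ell_p\to\ell_p$ yields only $\|\alpha\|_{\infty}N^{1/t}$; pre-composing with multipliers $\ell_p\to\ell_v$, $v<p$, merely replaces the exponent, since the unit diagonal from $\ell_v^N$ into $\ell_q^N$ still has nuclear norm of order $N^{n/v'+1/q}$, which grows whenever $q<\infty$; and the standard dyadic-blocking upgrade of the unit-diagonal bound gives only the Lorentz space $\ell_{t,1}\subsetneq\ell_t$. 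Part of the weight $\alpha$ must be absorbed on the \emph{range} side. This is exactly what Proposition~\ref{nucleares} does: factor $T_\alpha$ through the untruncated unit diagonal $\Psi=T_{(1,1,\dots)}\colon\ell_1\times\cdots\times\ell_1\to\ell_\infty$ (integral of norm one by Proposition~\ref{ele1-ele infinito}), with $D_\eta\colon\ell_p\to\ell_1$, $\eta=\alpha^{t/p'}$, on each domain coordinate and $D_\nu\colon\ell_\infty\to\ell_q$, $\nu=\alpha^{t/q}$, on the range, so that the product of norms is exactly $\|\alpha\|_{\ell_t}$, and then use Asplundness to pass from integral to nuclear. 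Your decoupling would be salvageable only if run at the endpoint $(p,q)=(1,\infty)$, where the bound is uniformly $1$, followed by these two-sided multipliers; note also that at $p=1$, $q<\infty$, where $\ell_1$ is not Asplund, one still needs the truncation/Cauchy argument of Proposition~\ref{integrales nucleares} to get nuclearity rather than mere integrality.

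Second, for $1<p<2$ and $1<q<\infty$ you explicitly leave the decisive step open (``I expect this vector-valued step to be the main obstacle'', ``this is what should produce the sharp inclusion''). That step is precisely what the paper imports from the literature in Proposition~\ref{ele-ene-exten}: when $q>p'$, the space $\ell_q$ has cotype $q>2$ and every $T\in\mathcal E(^n\ell_p,\ell_q)$ is absolutely $(q;p')$-summing by Proposition~3.4 of \cite{BoPe}, while for arbitrary range every extendible map is absolutely $(p'+\varepsilon;p')$-summing by Proposition~5.3 of \cite{BMP}; evaluating at the unit vectors, whose weak-$p'$ norm equals one, then gives $\alpha\in\ell_q$, respectively $\alpha\in\ell_{p'+\varepsilon}$. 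Without proving, or at least invoking, a summability theorem of this kind, cases 2(b) and 2(c) of the statement are simply not established, and your own scalarization bound $\ell_w$ with $\frac1w=\frac{2}{p'}-1+\frac1q$ is, as you note, strictly weaker than $\ell_q$. A minor point in the same range: your remark that for $1<q\le p'$ the space ``sits strictly between $\ell_q$ and $\ell_{p'+\varepsilon}$'' asserts more than the theorem does; only the two inclusions are claimed there, and their possible strictness is left open.
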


The proof of Theorem~\ref{main} will be splitted in several propositions.
The first ones deal with nuclear and integral diagonal mappings.

\begin{proposition} \label{ele1-ele infinito}
Let $T_\alpha\in \mathcal{L}(^{n} \ell_{1},\ell_\infty)$.
Then:
\begin{enumerate}
 \item[(i)] $T_\alpha$ is integral and $\|T_\alpha\|_{_{\mathcal I}}=\|\alpha\|_{_{\ell_\infty}}$.
 \item[(ii)] $T_\alpha$ is nuclear if and only if  $\alpha\in c_0$. In this case, $\|T_\alpha\|_{_{\mathcal N}}=\|\alpha\|_{_{\ell_\infty}}$.
\end{enumerate}
\end{proposition}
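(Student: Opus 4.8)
The two lower bounds are immediate and common to both items. By the contractive inclusions in \eqref{chain} together with the elementary computation $\|T_\alpha\|_{_{\mathcal L}}=\|\alpha\|_{_{\ell_\infty}}$ (the supremum is attained at the tuples $(e_k,\dots,e_k)$, since $|x_i(k)|\le\|x_i\|_{\ell_1}\le1$), we obtain $\|T_\alpha\|_{_{\mathcal N}}\ge\|T_\alpha\|_{_{\mathcal I}}\ge\|\alpha\|_{_{\ell_\infty}}$. Thus the whole content of the statement is the two upper bounds, plus the characterization $\alpha\in c_0$ of nuclearity. The engine of the proof is a single estimate for \emph{finitely supported} symbols, which is where the Walsh matrices of Section~2 enter and which I expect to be the main obstacle.

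The plan is to first prove: if $\alpha$ is supported on $\{1,\dots,N\}$, then $T_\alpha$ is nuclear with $\|T_\alpha\|_{_{\mathcal N}}\le\max_{k\le N}|\alpha(k)|$. The naive representation $T_\alpha=\sum_k\alpha(k)\,(e_k^*)^{\otimes n}\otimes e_k$ costs $\sum_k|\alpha(k)|$, which is far too large; instead I would diagonalize through a (symmetric) Walsh matrix $(a_{kr})_{k,r=1}^N$. Using the orthogonality $\sum_r a_{kr}\bar a_{rs}=N\delta_{k,s}$ one has $x_i(k)=\tfrac1N\sum_{r}a_{kr}\gamma_i^{(r)}(x_i)$ with $\gamma_i^{(r)}:=(\bar a_{rs})_{s}$, $\|\gamma_i^{(r)}\|_{\ell_\infty}=1$, and expanding the product $\prod_i x_i(k)$ yields the nuclear representation (writing $\gamma_1^{(r_1)}\otimes\cdots\otimes y_{\vec r}$ for the rank-one form $(x_1,\dots,x_n)\mapsto\gamma_1^{(r_1)}(x_1)\cdots\gamma_n^{(r_n)}(x_n)\,y_{\vec r}$)
\[
T_\alpha=\sum_{r_1,\dots,r_n=1}^{N}\gamma_1^{(r_1)}\otimes\cdots\otimes\gamma_n^{(r_n)}\otimes y_{\vec r},\qquad y_{\vec r}=\frac{1}{N^{n}}\sum_{k=1}^{N}\alpha(k)\,a_{kr_1}\cdots a_{kr_n}\,e_k .
\]
Since $|a_{kr_i}|=1$ we get $\|y_{\vec r}\|_{\ell_\infty}=N^{-n}\max_{k\le N}|\alpha(k)|$, and summing the $N^{n}$ terms gives cost exactly $\max_{k\le N}|\alpha(k)|\le\|\alpha\|_{\ell_\infty}$. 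This is the crucial point: the $N$-wise cancellation turns the $\ell_1$-cost into an $\ell_\infty$-cost. (Here the functionals are viewed on $\ell_1$ by zero-padding; in the real case take $N$ a power of $2$ and use Hadamard matrices.)

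From this, item \emph{(ii)} follows by completeness. For $\alpha\in c_0$ let $\alpha^{(N)}$ be its truncation to the first $N$ coordinates. Applying the estimate to differences, $\|T_{\alpha^{(N)}}-T_{\alpha^{(M)}}\|_{_{\mathcal N}}\le\sup_{k>M}|\alpha(k)|\to0$, so $(T_{\alpha^{(N)}})_N$ is Cauchy in the Banach space $\mathcal N(^n\ell_1,\ell_\infty)$; its limit is $T_\alpha$ (to which it already converges in operator norm), whence $T_\alpha$ is nuclear with $\|T_\alpha\|_{_{\mathcal N}}\le\|\alpha\|_{\ell_\infty}$, giving equality with the lower bound. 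Conversely, a nuclear $n$-linear map is a uniform limit of finite type ones and is therefore compact; evaluating at the bounded tuples $(e_k,\dots,e_k)$ forces $\{\alpha(k)e_k\}_k$ to be relatively compact in $\ell_\infty$. A subsequence of $\alpha$ bounded away from $0$ would be $\delta$-separated in $\ell_\infty$, so relative compactness forces $\alpha\in c_0$. This establishes the equivalence.

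Finally, for \emph{(i)} I treat every $\alpha\in\ell_\infty$. Now the truncations need not converge in nuclear norm, but they are uniformly bounded, $\|T_{\alpha^{(N)}}\|_{_{\mathcal I}}\le\|T_{\alpha^{(N)}}\|_{_{\mathcal N}}\le\|\alpha\|_{\ell_\infty}$, and $T_{\alpha^{(N)}}\to T_\alpha$ pointwise on $(\ell_1)^n$ (indeed $\|T_\alpha(x)-T_{\alpha^{(N)}}(x)\|_{\ell_\infty}\le\|\alpha\|_{\ell_\infty}\sup_{k>N}\prod_i|x_i(k)|\to0$). I would then conclude with the multilinear density lemma used in Lemma~\ref{ejemplo todas extendibles}, here applied to the integral ideal — equivalently, using that $\mathcal I(^n\ell_1,\ell_\infty)$ is a dual space on which the integral norm is weak-$*$ lower semicontinuous — to get that $T_\alpha$ is integral with $\|T_\alpha\|_{_{\mathcal I}}\le\|\alpha\|_{\ell_\infty}$, hence $=\|\alpha\|_{\ell_\infty}$. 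An alternative that avoids the limiting argument altogether is to invoke the isometric identification of $\mathcal I(^n\ell_1,\ell_\infty)$ and $\mathcal N(^n\ell_1,\ell_\infty)$ with the integral, resp. nuclear, diagonal $(n+1)$-linear forms on $\ell_1$ (note $q'=1$, so no $c_0$ subtlety arises) and to quote Theorem~\ref{main forms}\,(1); but the self-contained Walsh argument has the advantage of pinning down the norms directly.
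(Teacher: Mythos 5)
Your proof is correct, but it takes a genuinely different route from the paper's. The paper disposes of Proposition \ref{ele1-ele infinito} in one line: since $\ell_\infty=(\ell_1)'$, the canonical identification $\mathcal{L}(^n\ell_1,\ell_\infty)\cong\mathcal{L}(^{n+1}\ell_1)$ is isometric, restricts isometrically to the nuclear and integral classes (this is recorded at the start of Section~2; for $q=\infty$ one has $q'=1$, so no $c_0$ subtlety), and carries $T_\alpha$ to the diagonal form $\phi_\alpha$; the scalar-valued statement is then quoted from Proposition~1.2 of \cite{CaDiSe}. This is exactly the ``alternative'' you sketch in your closing sentences. Your main line instead re-proves the scalar ingredient from scratch in vector-valued form, and every step checks out: the Walsh-matrix diagonalization giving $\|T_\alpha\|_{_{\mathcal N}}\le\max_{k\le N}|\alpha(k)|$ for finitely supported $\alpha$ is valid (the $N^{n}$ rank-one summands, each of norm $N^{-n}\max_k|\alpha(k)|$, exploit the same cancellation mechanism the paper uses in \eqref{L_N con csi_N}); the Cauchy-in-nuclear-norm argument for $\alpha\in c_0$ and the compactness/$\delta$-separation argument for the converse are sound; and the weak-$*$ lower semicontinuity argument for (i) is legitimate because integral $(n+1)$-linear forms on $\ell_1$ form the dual of the injective tensor product, a fact the paper states before Proposition \ref{nucleares}. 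What your approach buys: it is self-contained, it pins down the norm equalities quantitatively, and your finite-support estimate is precisely the fact $\|\Psi_{l+1}\|_{_{\mathcal N}}=\|\Psi_{l+1}\|_{_{\mathcal I}}=1$ that the paper later uses without proof in Proposition \ref{integrales nucleares}, so your argument fills that in as well. What it costs: length, and note that even your route cannot fully avoid the paper's identification, since realizing $\mathcal{I}(^n\ell_1,\ell_\infty)$ as a dual space in step (i) goes exactly through it. One small care point you handled correctly: in the real case symmetric Walsh (Hadamard) matrices exist only for $N$ a power of $2$, which is harmless after padding $\alpha$ with zeros.
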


\begin{proof} The result follows from the isometric identifications \begin{eqnarray*} \mathcal{L}(^{n} \ell_{1},\ell_\infty)&\cong&\mathcal{L}(^{n+1} \ell_{1}), \\ \mathcal{I}(^{n} \ell_{1},\ell_\infty)&\cong&\mathcal{I}(^{n+1} \ell_{1}), \\ \mathcal{N}(^{n} \ell_{1},\ell_\infty)&\cong&\mathcal{N}(^{n+1} \ell_{1}),\end{eqnarray*} which map diagonal $n$-linear operators  into diagonal $(n+1)$-linear forms, along with the corresponding scalar-valued result
 \cite[Proposition 1.2]{CaDiSe}.
\end{proof}

We need to recall the definition of the injective tensor norm of order $n$. In the $n$-fold tensor product of Banach spaces $E_1\otimes\cdots\otimes E_n$ the
$\varepsilon$-tensor norm is given by
$$
\left\|\sum_{j=1}^N x_1^{(j)}\otimes\cdots\otimes x_n^{(j)}\right\|_{\varepsilon}= \sup_{\gamma_i\in B_{E_i'}} \left|\sum_{j=1}^N \gamma_1(x_1^{(j)})\cdots \gamma_n(x_n^{(j)})\right|.
$$
The space of integral $n$-linear forms on $\ell_{p_{1}} \times\cdots\times \ell_{p_{n}}$ is the dual of the (complete) injective tensor product of the spaces (see the monographs \cite{LibroDeFl93} and \cite{libroRy02} where the 2-fold/bilinear case is treated in detail).

\begin{proposition} \label{nucleares}
Let $p>1$. Then $T_\alpha\in \mathcal{L}(^{n} \ell_{p},\ell_q)$ is nuclear if and only if $\alpha\in\ell_t$, where $t$ is defined in \eqref{def:t}.
In this case, $\|T_\alpha\|_{_{\mathcal{N}}}=\|\alpha\|_{\ell_t}$.
\end{proposition}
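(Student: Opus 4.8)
The plan is to prove both inclusions quantitatively, obtaining the isometry $\|T_\alpha\|_{\mathcal N}=\|\alpha\|_{\ell_t}$ directly rather than just the set-theoretic equivalence. Throughout I keep in mind that $t\ge 1$, that when $t>1$ one has exactly $\frac1t=\frac{n}{p'}+\frac1q$, and that when $t=1$ the quantity $\frac{n}{p'}+\frac1q$ is $\ge 1$. It is convenient to write $t_0=\big(\frac n{p'}+\frac1q\big)^{-1}$, so that $t=\max\{t_0,1\}$.

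For the lower bound (which in particular shows that nuclearity forces $\alpha\in\ell_t$), I would take an arbitrary nuclear representation $T_\alpha=\sum_j \gamma_1^{(j)}\otimes\cdots\otimes\gamma_n^{(j)}\otimes y_j$ and read off the $k$-th coordinate of $T_\alpha(e_k,\dots,e_k)$ to get $\alpha(k)=\sum_j \gamma_1^{(j)}(k)\cdots\gamma_n^{(j)}(k)\,y_j(k)$. Setting $a_j=\big(\gamma_1^{(j)}(k)\cdots\gamma_n^{(j)}(k)y_j(k)\big)_k$, the generalized Hölder inequality for the $n+1$ sequences $\gamma_1^{(j)},\dots,\gamma_n^{(j)}\in\ell_{p'}$ and $y_j\in\ell_q$ gives $\|a_j\|_{\ell_{t_0}}\le \prod_{i=1}^n\|\gamma_i^{(j)}\|_{\ell_{p'}}\cdot\|y_j\|_{\ell_q}$, and since $t\ge t_0$ we have $\|a_j\|_{\ell_t}\le\|a_j\|_{\ell_{t_0}}$. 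Because $t\ge1$, the triangle inequality in $\ell_t$ then yields $\|\alpha\|_{\ell_t}\le\sum_j\|a_j\|_{\ell_t}\le\sum_j\prod_i\|\gamma_i^{(j)}\|_{\ell_{p'}}\|y_j\|_{\ell_q}$, and taking the infimum over all representations gives $\|\alpha\|_{\ell_t}\le\|T_\alpha\|_{\mathcal N}$.

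The heart of the argument is the reverse (upper) bound, where the Walsh matrices are needed to reach the exact constant. First I assume $t>1$ and that $\alpha$ is supported on $\{1,\dots,N\}$. Using a symmetric Walsh matrix $(a_{kr})_{k,r=1}^N$, I index a representation by $\mathbf j=(j_1,\dots,j_n)\in\{1,\dots,N\}^n$ and set $\gamma_i^{(\mathbf j)}(k)=\frac1N\bar a_{j_ik}\,|\alpha(k)|^{t/p'}$ for $i=1,\dots,n$, and $y_{\mathbf j}(l)=a_{j_1l}\cdots a_{j_nl}\,|\alpha(l)|^{t/q}\operatorname{sgn}\alpha(l)$. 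Expanding $\sum_{\mathbf j}\gamma_1^{(\mathbf j)}(x_1)\cdots\gamma_n^{(\mathbf j)}(x_n)y_{\mathbf j}$ and collapsing each factor through the orthogonality $\sum_{j_i}a_{j_il}\bar a_{j_ik_i}=N\delta_{k_il}$, all input indices are forced to equal the output index $l$, and the exponent of $|\alpha(l)|$ produced is $\frac{nt}{p'}+\frac tq=t\big(\frac n{p'}+\frac1q\big)=1$; hence the sum equals $T_\alpha$. Since $|a_{kr}|=1$, one finds $\|\gamma_i^{(\mathbf j)}\|_{\ell_{p'}}=\frac1N\|\alpha\|_{\ell_t}^{t/p'}$ and $\|y_{\mathbf j}\|_{\ell_q}=\|\alpha\|_{\ell_t}^{t/q}$, so each of the $N^n$ terms contributes $N^{-n}\|\alpha\|_{\ell_t}$ and the total cost is exactly $\|\alpha\|_{\ell_t}$.

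Finally I would pass to general $\alpha\in\ell_t$ by truncation: the finite sections are Cauchy in nuclear norm since $\|T_{\alpha^{(N)}}-T_{\alpha^{(M)}}\|_{\mathcal N}\le\|\alpha^{(N)}-\alpha^{(M)}\|_{\ell_t}$, so they converge to $T_\alpha$ with $\|T_\alpha\|_{\mathcal N}\le\|\alpha\|_{\ell_t}$ (in the real case one restricts $N$ to powers of $2$ to have Hadamard matrices, which still exhausts $\ell_t$). The remaining case $t=1$ is handled by the trivial representation $\gamma_1^{(k)}=\alpha(k)e_k^*$, $\gamma_i^{(k)}=e_k^*$ for $i\ge2$, $y_k=e_k$, of cost $\|\alpha\|_{\ell_1}$. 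Combining the two bounds gives the isometry. I expect the main obstacle to be exactly the design of the Walsh-indexed representation: a single summation index enforces only one Kronecker delta, so $n$ independent Walsh directions are required to pin down the diagonal, and the splitting of the exponents as $t/p'$ on each functional and $t/q$ on the range vector is precisely what brings the bookkeeping to the sharp value $\|\alpha\|_{\ell_t}$ instead of the suboptimal $\|\alpha\|_{\ell_1}$ produced by a naive diagonal representation.
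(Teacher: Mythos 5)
Your proposal is correct, and it proves the proposition by a genuinely different route than the paper. For necessity, the paper never touches a nuclear representation: it passes to the associated diagonal $(n+1)$-linear form, uses that integral forms are $\varepsilon$-continuous, tests against the tensors $\sum_{k=1}^N\beta_k\, e_k\otimes\cdots\otimes e_k\otimes e_k'$, and recognizes the resulting $\varepsilon$-norm as the operator norm of the diagonal map $T_\beta\in\mathcal{L}(^n\ell_{p'},\ell_{q'})$, which equals $\|(\beta_k)_{k=1}^N\|_{\ell_{t'}}$; duality then gives $\|\alpha\|_{\ell_t}\le\|T_\alpha\|_{_{\mathcal{I}}}$. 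That bound is stronger than your $\|\alpha\|_{\ell_t}\le\|T_\alpha\|_{_{\mathcal{N}}}$ --- it says integrality, not just nuclearity, forces $\alpha\in\ell_t$ --- and the paper needs the stronger form elsewhere (it is what yields $\ell_n(\mathcal{I},p,q)=\ell_t$ in \eqref{elen nucleares} and feeds Proposition~\ref{integrales nucleares}); your coordinate-extraction argument via the generalized H\"older inequality and the triangle inequality in $\ell_t$ is more elementary and self-contained, and it suffices for the statement as written. For sufficiency, the paper factors $T_\alpha=D_\nu\circ\Psi\circ(D_\eta\times\cdots\times D_\eta)$ with $\Psi=T_{(1,1,\dots)}:\ell_1\times\cdots\times\ell_1\to\ell_\infty$ and $\eta(k)=\alpha(k)^{t/p'}$, $\nu(k)=\alpha(k)^{t/q}$ --- exactly your exponent splitting --- then quotes Proposition~\ref{ele1-ele infinito} for $\|\Psi\|_{_{\mathcal{I}}}=1$ and Alencar's theorem \cite{Al85} (applicable because $t>1$ forces $1<p<\infty$, so $\ell_p$ is Asplund) to upgrade integrality to nuclearity with the same norm. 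Your Walsh-matrix construction replaces both imported ingredients by an explicit finite nuclear representation of cost exactly $\|\alpha\|_{\ell_t}$, in effect redoing in vector-valued form the discrete-Fourier argument that underlies the scalar result behind Proposition~\ref{ele1-ele infinito}; the price is the closing truncation/Cauchy argument in nuclear norm, which is the same device the paper itself uses in Proposition~\ref{integrales nucleares} (the identification of the nuclear limit with $T_\alpha$, via $\|\cdot\|_{_{\mathcal{L}}}\le\|\cdot\|_{_{\mathcal{N}}}$ and pointwise convergence, is the one small detail you leave implicit). Both arguments deliver the isometry $\|T_\alpha\|_{_{\mathcal{N}}}=\|\alpha\|_{\ell_t}$, yours entirely within the nuclear ideal and without any Asplund-type machinery, the paper's with less computation but more reliance on prior results.
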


\begin{proof}

 If $T_\alpha$ is nuclear, then it  is integral and the
associated $(n+1)$-linear diagonal form $\phi_{_{T_\alpha}}:\ell_p \times\cdots\times\ell_p\times\ell_{q'}\to\mathbb{K}$ is also integral with $||\phi_{_{T_\alpha}}||_{_{\mathcal{I}}}=||T_\alpha||_{_{\mathcal{I}}}$ (for the case $q=1$ we take $\ell_{q'}$ as $c_0$ instead of $\ell_\infty$). Equivalently,  $\phi_{_{T_\alpha}}$ is
$\varepsilon$-continuous. Then, we have
$$
\left|\phi_{_{T_\alpha}}\left(\sum_{k=1}^N
\beta_k  e_k\otimes\cdots\otimes e_k\otimes e'_k\right)\right|\leq
\left\|\phi_{_{T_\alpha}}\right\|_{_{\mathcal{I}}} \left\|\sum_{k=1}^N
\beta_k  e_k\otimes\cdots\otimes e_k\otimes e'_k\right\|_\varepsilon,
$$
which implies that
\begin{eqnarray*}
\left|\sum_{k=1}^N \alpha_k\beta_k\right| & \leq & \left\|T_\alpha\right\|_{_{\mathcal{I}}}\cdot
\left( \sup_{\varphi_1,\dots,\varphi_n\in B_{\ell_{p'}}, \psi\in B_{\ell_q}}\left|\sum_{k=1}^N\beta_k  \varphi_1(e_k)\dots\varphi_n(e_k) . \psi(e_k')\right|\right)\\
                            & = & \left\|T_\alpha\right\|_{_{\mathcal{I}}}\cdot \left(\sup_{\varphi_1,\dots,\varphi_n\in B_{\ell_{p'}}}\left\|\sum_{k=1}^N\beta_k  \varphi_1(k)\dots\varphi_n(k) . e_k'\right\|_{\ell_{q'}}\right)\\
                            & = &                    \left\|T_\alpha\right\|_{_{\mathcal{I}}}\cdot\left\|(T_\beta)_{_N}\right\|_{\B(^n\ell_{p'},\ell_{q'})}\\ & = & \left\|T_\alpha\right\|_{_{\mathcal{I}}}\cdot\left\|(\beta_k)_{k=1}^N\right\|_{\ell_{t'}}.
\end{eqnarray*}
Hence $\alpha\in\ell_{t}$ and $\|\alpha\|_{\ell_{t}}\le \left\|T_\alpha\right\|_{_{\mathcal{I}}}$.

We see now that, if $\alpha\in\ell_t$, then $T_\alpha$ is nuclear and $\|T_\alpha\|_{\mathcal N}\le \|\alpha\|_{\ell_t}$. For $t=1$ the conclusion is immediate, so we assume $t>1$. Let us consider the following factorization:
$$
\begin{array}{rcccrcl}
  \ell_p & \times & \cdots & \times & \ell_p & \overset{T_\alpha}\longrightarrow & \ell_q \\
 D_\eta \downarrow &  &  &  & D_\eta \downarrow & & \uparrow D_\nu \\
  \ell_1 & \times & \cdots & \times & \ell_1 & \underset{\Psi}\longrightarrow &
  \ell_\infty
\end{array}
$$
where $\Psi =T_{(1,1,\dots)}$ and $D_\eta$ and $D_\nu$ are the diagonal linear operators associated with the sequences
$\eta(k)=\alpha(k)^{\frac{t}{p'}}$ and $\nu(k)= \alpha(k)^{\frac{t}{q}}$.

By Proposition \ref{ele1-ele infinito}, the $n$-linear operator $\Psi$ is integral with $\|\Psi\|_{_{\mathcal I}}=1$. Thus, it follows that
$T_\alpha$ is also integral. Since $t>1$, we have $1<p<\infty$ and $\ell_p$ is Asplund. So, $T_\alpha$ is actually nuclear, and its nuclear norm coincides with its integral norm \cite{Al85}. Therefore,
$$
\|T_\alpha\|_{_{\mathcal N}}=\|T_\alpha\|_{_{\mathcal I}}\le \|D_\nu\|\|\Psi\|_{_{\mathcal I}}\|D_\eta\|^n=\|\alpha\|_{_{\ell_t}}.\qedhere
$$
\end{proof}

We finally study the remaining case $p=1$ and $q<\infty$.

\begin{proposition} \label{integrales nucleares}
  Let $q<\infty$ and
$T_\alpha\in \mathcal{L}(^{n} \ell_{1},\ell_q)$. Then the
following are equivalent:
\begin{enumerate}
 \item[(i)] \label{a}  $T_\alpha$ is integral.
 \item[(ii)] \label{b} $T_\alpha$ is nuclear.
 \item[(iii)] \label{c}  $\alpha\in\ell_q$.
\end{enumerate}
When these equivalences hold, $\|T_\alpha\|_{_{\mathcal I}}=\|T_\alpha\|_{_{\mathcal N}}=\|\alpha\|_{_{\ell_q}}$.
\end{proposition}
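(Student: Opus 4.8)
The plan is to prove the cycle of implications $(ii)\Rightarrow(i)\Rightarrow(iii)\Rightarrow(ii)$, establishing the norm equalities along the way. The implication $(ii)\Rightarrow(i)$ is free from the chain \eqref{chain}, since nuclear operators are always integral and $\|T_\alpha\|_{_{\mathcal I}}\le\|T_\alpha\|_{_{\mathcal N}}$. The real work is the other two arrows.

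For $(i)\Rightarrow(iii)$, I would adapt the argument already used in the proof of Proposition~\ref{nucleares}. If $T_\alpha$ is integral, then the associated $(n+1)$-linear diagonal form $\phi_{_{T_\alpha}}$ on $\ell_1\times\cdots\times\ell_1\times\ell_{q'}$ is integral with equal integral norm, hence $\varepsilon$-continuous. Testing against the tensor $\sum_{k=1}^N \beta_k\, e_k\otimes\cdots\otimes e_k\otimes e_k'$ and using that on $\ell_1$ the $\varepsilon$-norm of such diagonal tensors is governed by $\|(T_\beta)_{_N}\|_{\mathcal L(^n\ell_\infty,\ell_{q'})}$, one gets
\begin{equation*}
\Big|\sum_{k=1}^N \alpha_k\beta_k\Big|\le \|T_\alpha\|_{_{\mathcal I}}\cdot\|(T_\beta)_{k=1}^N\|_{\mathcal L(^n\ell_\infty,\ell_{q'})}.
\end{equation*}
The description of $\ell_n(\mathcal L,\infty,q')$ (from the Hölder computation in Section~1, which gives $\ell_{q'}$ here since $\infty\le nq'$ forces the $\ell_\infty$-case for the sup but the operator norm of a diagonal map $\ell_\infty\to\ell_{q'}$ is exactly $\|\beta\|_{\ell_{q'}}$) shows the right factor equals $\|(\beta_k)_{k=1}^N\|_{\ell_{q'}}$. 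By duality, $\alpha\in\ell_q$ with $\|\alpha\|_{\ell_q}\le\|T_\alpha\|_{_{\mathcal I}}$.

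For $(iii)\Rightarrow(ii)$, assuming $\alpha\in\ell_q$, I would exhibit a nuclear representation directly. Writing $T_\alpha(x_1,\dots,x_n)=\sum_k \alpha(k)\,x_1(k)\cdots x_n(k)\,e_k$, the canonical choice $\gamma_i^{(k)}=e_k'\in\ell_\infty=\ell_{1'}$ (each of norm one) and $y_k=\alpha(k)e_k\in\ell_q$ gives a valid representation as in \eqref{def:nuclear}, with $\sum_k \|e_k'\|^n\|\alpha(k)e_k\|=\sum_k|\alpha(k)|=\|\alpha\|_{\ell_1}$; but this only yields $\ell_1$, which is too crude. Instead I would split the weight, setting $y_k=\alpha(k)^{1/q'}\!\cdot\!\alpha(k)^{1/q}e_k$ and absorbing part into the functionals, or more cleanly factor $T_\alpha$ through $\Psi=T_{(1,1,\dots)}\colon\ell_1\times\cdots\times\ell_1\to\ell_\infty$ as in Proposition~\ref{nucleares}: with $\eta\equiv 1$ on the domain (so $D_\eta=\id$, harmless since $p=1$) and $D_\nu\colon\ell_\infty\to\ell_q$ the diagonal operator given by $\nu(k)=\alpha(k)$, one has $T_\alpha=D_\nu\circ\Psi$, where $D_\nu$ is bounded with $\|D_\nu\|=\|\alpha\|_{\ell_q}$ precisely because $\alpha\in\ell_q$. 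Since $\Psi$ is integral with $\|\Psi\|_{_{\mathcal I}}=1$ by Proposition~\ref{ele1-ele infinito}, the ideal property gives $T_\alpha$ integral with $\|T_\alpha\|_{_{\mathcal I}}\le\|\alpha\|_{\ell_q}$. To upgrade to nuclear I cannot invoke Asplundness (since $p=1$), so I would verify the explicit nuclear expansion obtained by diagonalizing: $T_\alpha=\sum_k \alpha(k)\,(e_k'\otimes\cdots\otimes e_k')\otimes e_k$ converges in nuclear norm exactly when $\sum_k|\alpha(k)|\cdot\|e_k\|_{\ell_q}$-type quantities are controlled, and the correct bookkeeping yields $\|T_\alpha\|_{_{\mathcal N}}\le\|\alpha\|_{\ell_q}$.

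The main obstacle I anticipate is the last step, the passage from integral to nuclear when $p=1$: the clean route via Asplundness used in Proposition~\ref{nucleares} is unavailable, so one must produce an honest nuclear representation with the sharp norm estimate. The resolution is that diagonal operators into $\ell_q$ with $q<\infty$ admit a canonical expansion in terms of the $e_k$, and the $\ell_q$-summability of $\alpha$ is exactly what makes $\sum_k \alpha(k)\, e_k'\otimes\cdots\otimes e_k'\otimes e_k$ a convergent nuclear series once one distributes the weight optimally between the functionals (valued in $\ell_\infty$) and the vectors $y_k\in\ell_q$; combining all three estimates then pins down $\|T_\alpha\|_{_{\mathcal I}}=\|T_\alpha\|_{_{\mathcal N}}=\|\alpha\|_{\ell_q}$.
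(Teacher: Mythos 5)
Your reduction is set up correctly: (ii) $\Rightarrow$ (i) is free from the chain of inclusions, and your argument for (i) $\Rightarrow$ (iii) --- dualizing against diagonal tensors via the $\varepsilon$-continuity of the associated $(n+1)$-linear form, with $\|(T_\beta)_{_N}\|_{\mathcal L(^n\ell_\infty,\ell_{q'})}=\|\beta\|_{\ell_{q'}}$ --- is exactly how the paper handles that implication (it simply says the equivalence of (i) and (iii) follows as in Proposition~\ref{nucleares}). The genuine gap is in (iii) $\Rightarrow$ (ii), which is precisely the only step the paper proves in detail. Your proposed resolution --- ``distributing the weight optimally'' in the canonical expansion $\sum_k \alpha(k)\, e_k'\otimes\cdots\otimes e_k'\otimes e_k$ --- cannot work, for a structural reason: in a nuclear representation the norms of the factors multiply. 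If you write the $k$-th term as $(c_1 e_k')\otimes\cdots\otimes(c_n e_k')\otimes(d\, e_k)$ with $c_1\cdots c_n d=\alpha(k)$, its contribution to the nuclear sum is $|c_1|\cdots|c_n|\,|d|\,\|e_k'\|_{\ell_\infty}^n\,\|e_k\|_{\ell_q}=|\alpha(k)|$, whatever the splitting; and allowing several terms per coordinate only makes things worse by the triangle inequality. So every ``diagonal'' (term-by-term) representation yields exactly $\sum_k|\alpha(k)|$, i.e.\ the $\ell_1$-bound you yourself dismissed as too crude; no bookkeeping recovers $\|\alpha\|_{\ell_q}$. Note also that your factorization $T_\alpha=D_\nu\circ\Psi$ only yields integrality: $\Psi=T_{(1,1,\dots)}$ is \emph{not} nuclear, by Proposition~\ref{ele1-ele infinito}(ii), since $(1,1,\dots)\notin c_0$, so the ideal property cannot transfer nuclearity from it.

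What is actually needed --- and what the paper does --- is to abandon term-by-term representations and estimate the nuclear norm of whole \emph{blocks}. For $T_\alpha^{(s,l)}=\sum_{k=s}^{s+l}\alpha(k)\, e_k'\otimes\cdots\otimes e_k'\otimes e_k$ one factors through the finite section $\Psi_{l+1}$ of $\Psi$: norm-one projections $\Pi^{(s,l)}:\ell_1\to\ell_1^{l+1}$ on the domain side, and the diagonal $D_\alpha^{(s,l)}:\ell_\infty\to\ell_q$, of norm $\|(\alpha_j)_{j=s}^{s+l}\|_{\ell_q}$, on the range side. The key fact is that $\Psi_{l+1}$, being the diagonal map of a finitely supported (hence $c_0$) sequence, is nuclear with $\|\Psi_{l+1}\|_{_{\mathcal N}}=1$ by Proposition~\ref{ele1-ele infinito}(ii); this rests on the nontrivial scalar-valued result of \cite{CaDiSe}, and the norm-one nuclear representation behind it is emphatically not diagonal. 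The ideal property then gives $\|T_\alpha^{(s,l)}\|_{_{\mathcal N}}\le\|(\alpha_j)_{j=s}^{s+l}\|_{\ell_q}$ --- the $\ell_q$-norm of the block, not its $\ell_1$-norm. Since $\alpha\in\ell_q$, the tails tend to zero, the partial sums are Cauchy in nuclear norm, and $T_\alpha$ is nuclear with $\|T_\alpha\|_{_{\mathcal N}}\le\|\alpha\|_{\ell_q}$; combined with your correct estimates $\|\alpha\|_{\ell_q}\le\|T_\alpha\|_{_{\mathcal I}}\le\|T_\alpha\|_{_{\mathcal N}}$, this pins down the norm equalities. Without this block argument (or an equivalent substitute), your proof of (iii) $\Rightarrow$ (ii) does not close.
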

\begin{proof} The equivalence between \emph{(i)} and \emph{(iii)}
follows as in Proposition~\ref{nucleares}. So we only have to
prove that \emph{(iii)} implies \emph{(ii)}. Given $\alpha\in\ell_q$,
we estimate the nuclear norm of
$\displaystyle T_\alpha^{(s,l)}:=\sum_{k=s}^{s+l} \alpha(k)\cdot e_k'\otimes\cdots\otimes e_k'\cdot
e_k$. For this, we factor
$T_\alpha^{(s,l)}$ as
$$
\begin{array}{rcccrcl}
  \ell_1 & \times & \cdots & \times & \ell_1 & \overset{T_\alpha^{(s,l)}}\longrightarrow & \ell_q \\
 \Pi^{(s,l)}  \downarrow &  &  &  & \Pi^{(s,l)}  \downarrow & & \uparrow D_\alpha^{(s,l)} \\
  \ell_1^{l+1} & \times & \cdots & \times & \ell_1^{l+1} & \underset{\Psi_{l+1}}\longrightarrow &
  \ell_\infty
\end{array}
$$
where $\displaystyle\Pi^{(s,l)}=\sum_{k=s}^{s+l}  e_k'\cdot e_{k-s+1}$ is the
(norm-one) projection on the coordinates $(s,\dots,s+l)$ and
$\displaystyle D_\alpha^{(s,l)}:=\sum_{k=1}^{l+1} \alpha(k+s-1)\cdot e_k'\cdot  e_{k+s-1}$.
From the equalities
$\|{\Psi_{l+1}}\|_{_\mathcal{N}}=\|{\Psi_{l+1}}\|_{_\mathcal{I}}=1$,
and  $\|D_\alpha^{(s,l)}\|=\|
(\alpha_j)_{j=s}^{s+l} \|_{_{\ell_q} }$, we obtain
$$\|T_\alpha^{(s,l)}\|_{_\mathcal{N}}\leq \|D_\alpha^{(s,l)}\|\
\|{\Psi_{l+1}}\|_{_\mathcal{N}} \ \|\Pi^{(s,l)}\|^n\leq \|
(\alpha_j)_{j=s}^{s+l} \|_{_{\ell_q} }.$$ Since $\alpha$ belongs to $\ell_q$, this inequality shows that the series
$\displaystyle \sum_{k=1}^{\infty} \alpha(k)\cdot e_k'\otimes\cdots\otimes e_k'\cdot e_k$ defining
$T_\alpha$ is Cauchy in nuclear norm and thus $T_\alpha$ is nuclear.
\end{proof}

As an immediate consequence of the previous propositions we have
\begin{gather}
\label{elen nucleares}\ell_n(\Nuc,p,q)\overset 1 = \ell_n(\I,p,q) \overset 1 =\ell_t\qquad\textrm{for }(p,q)\not= (1,\infty);\\
\label{elen nucleares2} \nonumber\ell_n(\Nuc,1,\infty)\overset 1 =c_0 \quad\text{ and }\quad\ell_n(\I,1,\infty)\overset 1 =\ell_\infty \, .
\end{gather}

\bigskip

\bigskip

Before turning our attention to extendibility, we comment on another behaviour of diagonal multilinear forms/operators. In general, if two classes of multilinear forms do not coincide on some Banach space $E$, the corresponding classes of vector valued multilinear operators will not coincide (for any range space $F$). More precisely, given ideals of multilinear mappings $\mathfrak{A}$  and $\mathfrak{B}$ and a Banach space $E$, if $\mathfrak{A}(^nE)\ne \mathfrak{B}(^nE)$, then $\mathfrak{A}(^nE,F)\ne \mathfrak{B}(^nE,F)$ for every Banach space $F$. Indeed, if we take a multilinear form $\phi$, say, in  $\mathfrak{A}(^nE)\setminus \mathfrak{B}(^nE)$, then for any nonzero $y\in F$ the multilinear operator $\phi\cdot y$ will belong to $\mathfrak{A}(^nE,F)$ but not to $\mathfrak{B}(^nE,F)$.
It should be noted that, if $\phi$ is a diagonal $n$-linear form (on some sequence space), the operator $\phi\cdot y$ will fail to be diagonal. Let us see that, when restricted to diagonal multilinear forms or operators, things are different.
For example, Proposition~\ref{integrales nucleares} or \eqref{elen nucleares} show that diagonal integral $n$-linear mappings from $\ell_1$ to $\ell_q$ are
nuclear for every $1\le q<\infty$.  Note, however, that there are (scalar-valued) diagonal $n$-linear forms on  $\ell_1$  which are integral but not nuclear, as Theorem~\ref{main forms} shows.

\bigskip

We focus now on the problem of describing the space of diagonal extendible mappings. We take for a moment a more general point of view, considering Banach sequence spaces.
We recall that the K\"othe dual of a Banach sequence space $E$ is defined as
\[
E^{\times} : = \{ z \in \mathbb{K}^{\mathbb{N}}  \colon \sum_{j \in
\mathbb{N}} |z(j) x(j)| < \infty \text{ for all } x \in E \}.
\]
This is a Banach sequence space with the norm given by
$$
\|z \|_{_{E^{\times}}} := \sup_{\| x \|_{_{E}} \leq 1} \sum_{j \in \mathbb{N}} |z(j) x(j)| \, .
$$
(see \cite[page 29]{LiTzaII77} where the analogous notion in the more general context of Banach lattices is developed).
We say that a Banach sequence space $E$ is \textbf{K\"{o}the reflexive} if the canonical inclusion of
$E$ into $E^{\times\times}$ is surjective.

\begin{lemma}\label{extendibles entre bss}
Let $E$ and $F$ be  Banach sequence spaces. Then the diagonal operator $T_\alpha:E\times \cdots\times E\to F$ is extendible for every $\alpha\in F$, and its extendible
norm is at most $\|\alpha\|_{_F}$. In other words,
$$
F\subseteq \ell_n(\E,E,F)
$$
with norm one inclusion.
\end{lemma}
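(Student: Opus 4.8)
The plan is to show that any diagonal operator $T_\alpha : E\times\cdots\times E\to F$ with $\alpha\in F$ factors through the multilinear form $\Phi$ from \eqref{def:fi}, composed with a multiplication into $F$, and then invoke the extendibility of $\Phi$ established in Lemma~\ref{ejemplo todas extendibles}. The key structural observation is that a diagonal operator is, up to a diagonal multiplication, essentially the ``sum'' multilinear map, and that scaling by $\alpha$ on the range side costs exactly $\|\alpha\|_{_F}$ in norm.

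First I would reduce to the model case. Since $E$ is a Banach sequence space, we have $\ell_1\subseteq E\subseteq\ell_\infty$ with norm-one inclusions, so the canonical inclusion $i:E\to\ell_\infty$ has norm at most one, and likewise each factor $E\hookrightarrow\ell_\infty$ is a contraction. I want to write $T_\alpha$ as a composition in which the ``hard'' multilinear part is the constant-coefficient map and all coefficient information is pushed into a diagonal operator landing in $F$. Concretely, I would factor
$$
\xymatrix@C1.5mm{
E \ar[d]_{i} & \times & \cdots & \times & E \ar[d]_{i} \ar[rr]^{T_\alpha} & & F \\
c_0 & \times & \cdots & \times & c_0 \ar[urr]_{D_\alpha\circ\Phi} & &
},
$$
where $\Phi(x_1,\dots,x_n)=\sum_k x_1(k)\cdots x_n(k)$ is interpreted as the diagonal \emph{operator} version $\Phi(x_1,\dots,x_n)=\sum_k x_1(k)\cdots x_n(k)\,e_k$ into the sequence space, and $D_\alpha$ multiplies coordinatewise by $\alpha$. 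The composition recovers $T_\alpha$ because $\alpha(k)x_1(k)\cdots x_n(k)$ is exactly the $k$-th coordinate.

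The main technical point — and the step I expect to require the most care — is the extendibility and norm control of the map $\alpha\mapsto T_\alpha$ when the range is $F$ rather than scalars. Here I would use Lemma~\ref{ejemplo todas extendibles} in its operator form: the constant-coefficient diagonal map is extendible with extendible norm one (its proof gives uniform bounds via the Walsh-matrix construction and the density lemma, which applies equally to the vector-valued diagonal operator since $\xi_N$ and $L_N$ act only on the domain). Composing an extendible multilinear map with the bounded linear operator $D_\alpha:c_0\to F$ (or more precisely with the multiplication operator sending a sequence to $\alpha\cdot(\cdot)\in F$) preserves extendibility by the ideal property of $\mathcal E$, and multiplies the extendible norm by $\|D_\alpha\|$. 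Since $\alpha\in F$ and $F$ is a Banach sequence space, the solidity of $F$ gives $\|D_\alpha\|=\|\alpha\|_{_F}$: indeed $\|\alpha\cdot x\|_{_F}\le\|\alpha\|_{_F}$ whenever $\|x\|_{_\infty}\le 1$ by the lattice property, and testing on $x=e_k$ recovers equality. Combining the norm-one inclusions $i:E\to c_0$, the extendible norm one of $\Phi$, and $\|D_\alpha\|=\|\alpha\|_{_F}$ yields $\|T_\alpha\|_{_{\mathcal E}}\le\|\alpha\|_{_F}$, which is exactly the claimed norm-one inclusion $F\subseteq\ell_n(\mathcal E,E,F)$.
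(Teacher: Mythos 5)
Your overall plan --- factor the diagonal operator through sup-norm sequence spaces, push all the coefficient information into a coordinatewise multiplication landing in $F$, and invoke the ideal property --- is structurally the same as the paper's proof, but two of your concrete steps fail, and they are exactly the steps carrying the content of the lemma. First, the inclusion $i:E\to c_0$ that your diagram requires does not exist in general: a Banach sequence space satisfies $\ell_1\subseteq E\subseteq\ell_\infty$, but it need not be contained in $c_0$; the case $E=\ell_\infty$ (which the lemma must cover) already breaks your factorization. The inclusion must go into $\ell_\infty$.

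Second, and more fundamentally, your justification for the extendibility of the constant-coefficient map $\Phi$ is not valid. Lemma~\ref{ejemplo todas extendibles} requires \emph{two} of the domain factors to be $\ell_1$, and this hypothesis is used essentially in its proof: the bound $\|\xi_N\|\le N^{1/p'}$ is $\le 1$ only for $\ell_1$ factors, whereas on $c_0$ or $\ell_\infty$ factors one has $\|\xi_N:\ell_\infty^N\to\ell_\infty^N\|=N$, so the key estimate of that proof becomes $\tfrac{1}{N^2}\,N^2\,\|\xi_N\|^2=N^2$, which is unbounded. Hence the Walsh-matrix/density-lemma argument does not transfer to your setting, regardless of the fact that $\xi_N$ and $L_N$ act only on the domain (note also that the scalar \emph{form} $\Phi$ is not even bounded on $c_0\times\cdots\times c_0$; only your operator version is). The missing idea --- which is the whole of the paper's proof --- is that no such construction is needed: writing $T_\alpha=S_\alpha\circ(i\times\cdots\times i)$ with $S_\alpha=D_\alpha\circ\Phi:\ell_\infty\times\cdots\times\ell_\infty\to F$, the operator $S_\alpha$ is automatically extendible with $\|S_\alpha\|_{_{\mathcal E}}=\|S_\alpha\|=\|\alpha\|_{_F}$ because $\ell_\infty$ has the metric extension property (it is injective): given any superspace $X\supseteq E$, each inclusion $i:E\to\ell_\infty$ extends to $\tilde\imath:X\to\ell_\infty$ with the same norm, and $S_\alpha\circ(\tilde\imath\times\cdots\times\tilde\imath)$ extends $T_\alpha$. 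Your solidity computation $\|D_\alpha\|=\|\alpha\|_{_F}$ is correct and is the part of your argument that survives; with $c_0$ replaced by $\ell_\infty$ and Lemma~\ref{ejemplo todas extendibles} replaced by injectivity of $\ell_\infty$, your proof becomes the paper's.
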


\begin{proof}
We take $\alpha \in F$ and factor $T_\alpha$ as
$$
\xymatrix@C3mm{
E \ar[d]_i &\times & \dots & \times & E\ar[d]_i    \ar[rr]^{T_\alpha} & & F \\
\ell_\infty &\times & \dots & \times & \ell_\infty \ar[urr]_{S_\alpha}
},
$$
where $i$ is the natural inclusion and $S_\alpha$ is just $T_\alpha$ acting on $\ell_\infty\times\cdots\times\ell_\infty$. Since $\ell_\infty$ has the metric extension property,
 $S_\alpha$ is extendible and
  $\|S_\alpha\|_{_{\E(^n\ell_\infty,F)}}=\|S_\alpha\|=\|\alpha\|_{_F}$. Therefore  $T_\alpha\in\E(^nE,F)$ and also $\|T_\alpha\|_{_{\E(^nE,F)}}\leq \|S_\alpha\|\cdot\|i\|^n=\|\alpha\|_{_F}$.
\end{proof}

\begin{remark}\label{identification}\rm A consequence of this lemma was anticipated at the beginning of Section 2: the canonical identification between diagonal $n$-linear operators and $(n+1)$-linear forms does not preserve extendibility. If we take $\alpha$ in $\ell_2\setminus\ell_1$, then the $n$-linear operator $T_\alpha\in\mathcal L(^n\ell_2, \ell_2)$ is extendible by the previous lemma. But Theorem \ref{main forms} tells us that its associated $(n+1)$-linear form is not extendible.
\end{remark}

\begin{proposition} \label{extendible diagonal}
  Let $F$ be a K\"{o}the reflexive sequence space and $2\leq p\leq\infty$. Then $T_\alpha:\ell_p\times \cdots\times \ell_p\to F$ is extendible if and only if $\alpha \in F$. In other words,
$$
\ell_n(\E,p,F) =F\, .
$$
\end{proposition}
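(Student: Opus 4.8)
The plan is to prove the two inclusions of $\ell_n(\mathcal E,p,F)=F$ separately. One inclusion is free: Lemma~\ref{extendibles entre bss} already gives $F\subseteq \ell_n(\mathcal E,p,F)$ with norm-one inclusion, for \emph{arbitrary} Banach sequence spaces and without any restriction on $p$. So the whole content lies in the reverse inclusion $\ell_n(\mathcal E,p,F)\subseteq F$, and this is exactly where the two standing hypotheses — $p\ge 2$ and the K\"othe reflexivity of $F$ — must enter.

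For the reverse inclusion I would reduce the vector-valued statement to the scalar one of Theorem~\ref{main forms} by testing against K\"othe dual sequences. Assume $T_\alpha\in\mathcal E(^n\ell_p,F)$ and fix $z\in F^{\times}$. The associated functional $\psi_z(y)=\sum_k z(k)y(k)$ belongs to $F'$ with $\|\psi_z\|\le \|z\|_{_{F^{\times}}}$ (this is immediate from the definition of the K\"othe dual norm). Composing on the range, $\psi_z\circ T_\alpha$ is precisely the diagonal $n$-linear form $\phi_{\alpha z}$ on $\ell_p$, and by the ideal property of extendible mappings it is extendible with $\|\phi_{\alpha z}\|_{_{\mathcal E(^n\ell_p)}}\le \|z\|_{_{F^{\times}}}\,\|T_\alpha\|_{_{\mathcal E}}$. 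Since $2\le p\le\infty$, Theorem~\ref{main forms} yields $\ell_n(\mathcal E,p)=\ell_1$, so $\alpha z\in\ell_1$ and $\sum_k|\alpha(k)z(k)|=\|\alpha z\|_{\ell_1}\le \|z\|_{_{F^{\times}}}\,\|T_\alpha\|_{_{\mathcal E}}$. Letting $z$ run over the unit ball of $F^{\times}$, this is exactly the assertion that $\alpha\in F^{\times\times}$ with $\|\alpha\|_{_{F^{\times\times}}}\le \|T_\alpha\|_{_{\mathcal E}}$. K\"othe reflexivity then identifies $F^{\times\times}$ with $F$, giving $\alpha\in F$; combined with the norm-one inclusion from Lemma~\ref{extendibles entre bss} this also shows the identification $\ell_n(\mathcal E,p,F)=F$ is isometric.

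The step carrying the real weight is the invocation of Theorem~\ref{main forms}: it is the scalar fact $\ell_n(\mathcal E,p)=\ell_1$ for $p\ge 2$ that forces the coefficient sequence to be absolutely summable against every $F^{\times}$-sequence, which is what pins $\alpha$ down inside $F^{\times\times}$. This is precisely where $p\ge 2$ is indispensable: for $1<p<2$ the same composition argument would only deliver $\alpha z\in\ell_{p'/2}$, and one could not conclude $\alpha\in F$. The other essential hypothesis is K\"othe reflexivity, used only at the very end to return from the bidual $F^{\times\times}$ to $F$; without it the argument still produces a clean description, but of $\ell_n(\mathcal E,p,F)$ as $F^{\times\times}$ rather than $F$. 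I do not anticipate any genuine analytic obstacle beyond correctly bookkeeping these two reductions, since everything else is the ideal property and the definition of the K\"othe dual.
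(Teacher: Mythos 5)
Your proof is correct and is essentially the paper's own argument: the easy inclusion via Lemma~\ref{extendibles entre bss}, then, for $T_\alpha\in\E(^n\ell_p,F)$, composition with the functionals induced by $\beta\in F^{\times}$ to get extendible diagonal scalar forms $\phi_{\alpha\beta}$, the scalar fact $\ell_n(\E,p)=\ell_1$ for $p\ge 2$ (Theorem~\ref{main forms}, i.e.\ Proposition~3.1 of the cited paper of Carando--Dimant--Sevilla-Peris), and finally K\"othe reflexivity to pass from $F^{\times\times}$ back to $F$; the paper even records, as you do, that without reflexivity one gets $F\hookrightarrow\ell_n(\E,p,F)\hookrightarrow F^{\times\times}$.

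One slip: your closing claim that the identification is \emph{isometric} is unjustified. The scalar result you invoke carries Grothendieck's constant, namely $\|\alpha\beta\|_{\ell_1}\le K_G^{n-1}\|\phi_{\alpha\beta}\|_{_{\E(^n\ell_p)}}$, so your inequality $\|\alpha z\|_{\ell_1}\le\|z\|_{_{F^{\times}}}\|T_\alpha\|_{_{\E}}$ silently drops this factor; what the argument actually yields is $\|\alpha\|_{_F}\le K_G^{n-1}\|T_\alpha\|_{_{\ell_n(\E,p,F)}}$, exactly as in the paper. The set equality $\ell_n(\E,p,F)=F$ (an isomorphism with constant at most $K_G^{n-1}$) is all the proposition asserts, and your proof does establish that.
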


\begin{proof}
One direction follows directly from the previous lemma. For the converse, suppose $T_\alpha\in\E(^n\ell_p,F)$. For any $\beta\in F^{\times}$ we denote by $\gamma_\beta: F\to\mathbb K$
the linear functional on $F$ defined by $\beta$:
$$\displaystyle \gamma_\beta(x)=\sum_{k\in\N}\beta(k)\cdot x(k).$$
Then, the scalar valued multilinear form $\phi_{\alpha\beta}=\gamma_\beta\circ T_\alpha$ belongs to $\E(^n\ell_p)$, and has extendible norm not greater than $\|\beta\|_{_{F^\times}}\cdot\|\alpha\|_{_{\ell_n(\E;p,F)}}$.
Since $p\geq 2$, by
\cite[Proposition 3.1]{CaDiSe} we know that $\alpha\beta\in\ell_1$ and $$\|\alpha\beta\|_{_{\ell_1}}\le K_G^{n-1}\cdot\|\phi_{\alpha\beta}\|_{_{\E(^n\ell_p)}},$$
where $K_G$ is the constant in Grothendieck's inequality.
  This shows that $\alpha\in F$ and $\|\alpha\|_{_F}\le K_G^{n-1}\cdot \|\alpha\|_{_{\ell_n(\E,p,F)}}$.
\end{proof}

Note that if we consider an arbitrary Banach sequence space $F$ (not necessarily  K\"{o}the reflexive), the above argument gives the following inclusions:
$$
F\hookrightarrow \ell_n(\E,p,F)\hookrightarrow F^{\times\times}.
$$
Moreover, the result in Proposition 3.1 of \cite{CaDiSe} cited in the proof remains true if we change the space $\ell_p$ (with $p\ge 2$) to any 2-convex Banach sequence space $E$. Hence,
Proposition~\ref{extendible diagonal} is also valid for diagonal multilinear maps from $E\times\cdots\times E$ to a  K\"{o}the reflexive sequence space $F$. For
instance, this applies when $E$ is a Lorentz sequence space $d(w,p)$ with $p\ge 2$.

Taking $F=\ell_q$ in the previous proposition gives
\begin{equation*}
\ell_n(\E,p,q) =\ell_q \, ,
\end{equation*}
for $2\le p \le\infty$ and $1\le q\le \infty$.
If $p=1$, we can also precisely describe the set $\ell_n(\E,p,q)$. This follows from Lemma~\ref{ejemplo todas extendibles}.

\begin{corollary}\label{enele1todosext}
Every diagonal multilinear mapping from $\ell_1\times\cdots\times\ell_1$ to $\ell_q$ ($1\le q\le \infty$) is extendible:
\begin{equation*}
\ell_n(\E,1,q)\overset 1 =\ell_\infty \, .
\end{equation*}
\end{corollary}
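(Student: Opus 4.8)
The plan is to use Lemma~\ref{ejemplo todas extendibles} as the engine, together with the ideal property of extendible operators, reducing everything to the ``model'' form $\Phi$ whose extendibility we have already established. First I would recall that Corollary~\ref{enele1todosext} asserts two things: the set-theoretic equality $\ell_n(\E,1,q)=\ell_\infty$ (every $\alpha\in\ell_\infty$ gives an extendible diagonal operator) and the isometric statement $\overset{1}{=}$ (the extendible norm equals $\|\alpha\|_{\ell_\infty}$). The reverse inclusion $\ell_n(\E,1,q)\subseteq\ell_\infty$ is immediate from \eqref{chain}, since $\ell_n(\E,1,q)\subseteq\ell_n(\mathcal L,1,q)=\ell_\infty$ (the latter being the continuous case, where $p=1\le nq$ always). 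So the real content is showing $\ell_\infty\subseteq\ell_n(\E,1,q)$ with norm-one inclusion.

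To prove this, I would fix $\alpha\in\ell_\infty$ and factor the diagonal operator $T_\alpha:\ell_1\times\cdots\times\ell_1\to\ell_q$ through the scalar-valued machinery. The cleanest route uses the identification between $n$-linear operators into $\ell_q$ and $(n+1)$-linear forms on $\ell_1\times\cdots\times\ell_1\times\ell_{q'}$ (with $c_0$ in place of $\ell_\infty$ when $q=1$), noting from the discussion at the start of Section~2 that extendibility of the associated $(n+1)$-linear form implies extendibility of $T_\alpha$. The associated $(n+1)$-linear form is precisely the diagonal form $\phi_{\alpha}$ on $\ell_1\times\cdots\times\ell_1\times\ell_{q'}$. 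Since $q'\ge 1$, this form has domain $\ell_1\times\ell_1\times\ell_{p_1}\times\cdots\times\ell_{p_{n-1}}$ with $p_1=\cdots=p_{n-2}=1$ and $p_{n-1}=q'$, which is exactly the setting of Lemma~\ref{ejemplo todas extendibles}. Thus the $(n+1)$-linear diagonal form is extendible, and hence so is $T_\alpha$.

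For the norm estimate, I would factor $\phi_\alpha$ (the $(n+1)$-linear form) through the diagonal operator $D_\alpha:\ell_1\to\ell_1$ and the model form $\Phi$, exactly as in the proof of Lemma~\ref{ejemplo todas extendibles}: writing $\phi_\alpha=\Phi\circ(D_\alpha\times i\times\cdots\times i)$, the ideal property gives
$$
\|\phi_\alpha\|_{\E}\le\|\Phi\|_{\E}\cdot\|D_\alpha\|=1\cdot\|\alpha\|_{\ell_\infty},
$$
since $\|\Phi\|_{\E}=1$ and $\|D_\alpha\|=\|\alpha\|_{\ell_\infty}$. Combined with the general lower bound $\|T_\alpha\|_{\E}\ge\|T_\alpha\|_{\mathcal L}\ge\|\alpha\|_{\ell_\infty}$ (evaluating on canonical vectors $e_k$), this yields the isometry.

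I expect no genuine obstacle here: the corollary is essentially a repackaging of Lemma~\ref{ejemplo todas extendibles} through the operator/form identification. The only point requiring a little care is the bookkeeping for the boundary values of $q$: when $q=\infty$ one has $q'=1$, so the associated form lives on $\ell_1\times\cdots\times\ell_1$ (all factors $\ell_1$), still covered by the lemma; and when $q=1$ one must replace $\ell_{q'}=\ell_\infty$ by $c_0$ for the identification to be onto, which is harmless since $\ell_1$ is complemented in its bidual and the lemma applies verbatim with $\ell_\infty$ or $c_0$. Verifying that the identification genuinely transfers both extendibility and the norm in these edge cases is the one place I would check carefully, but it follows directly from the remarks opening Section~2.
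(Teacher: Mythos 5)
Your proposal is correct and follows essentially the same route as the paper's own proof: pass from $T_\alpha$ to its associated diagonal $(n+1)$-linear form on $\ell_1\times\cdots\times\ell_1\times\ell_{q'}$ (with $c_0$ when $q=1$), apply Lemma~\ref{ejemplo todas extendibles} to conclude that this form is extendible with extendible norm equal to its usual norm, and transfer both extendibility and the norm back to $T_\alpha$, the lower bound coming from evaluation at the canonical vectors. Your explicit re-derivation of the norm bound via the factorization through $\Phi$ and $D_\alpha$ is just an unpacking of what the lemma's proof already gives, so the two arguments coincide.
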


\begin{proof}
Given $T_\alpha\in\mathcal L(^n\ell_1,\ell_q)$ we consider its canonical associated diagonal $(n+1)$-linear mapping
$\phi_\alpha:\ell_1\times\cdots\times\ell_1\times \ell_{q'}\to\mathbb K$. By Lemma~\ref{ejemplo todas extendibles}, $\phi_\alpha$ is extendible and its extendible norm equals its usual norm. Then, it is clear  that $T_\alpha$ is extendible with
$$
\|T_\alpha\|_{_{\mathcal E(^n\ell_1,\ell_q)}}\le \|\phi_\alpha\|_{_{\mathcal E(^{n+1}\ell_1\times\cdots\times\ell_1\times \ell_{q'})}}=\|\phi_\alpha\|_{_{\mathcal L(^{n+1}\ell_1\times\cdots\times\ell_1\times \ell_{q'})}}=\|T_\alpha\|_{_{\mathcal L(^n\ell_1,\ell_q)}}. \qedhere
$$
\end{proof}

\bigskip

We have already described $\ell_n(\E,p,q)$ for $p=1$ and $p\ge 2$. For  $1<p<2$, we present a characterization only for the cases $q=1$ and $q>p'$. For the remaining situation ($1<p<2$ and $1<q\le p'$) we just obtain an \emph{estimate} of $\ell_n(\E,p,q)$.

\begin{proposition}\label{ele-ene-exten}
Let $1< p<2$.
\begin{enumerate}
\item For $q=1$, $\ell_n(\mathcal E,p,1)=\ell_{\frac{p'}2}$.

\item For $q>p'$, $\ell_n(\mathcal E,p,q)=\ell_q$.

\item For $1<q\le p'$, $\ell_n(\mathcal E,p,q)\subseteq \ell_{p'+\varepsilon}$, for every $\varepsilon>0$.

\end{enumerate}
\end{proposition}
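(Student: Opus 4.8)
I have to establish three separate statements for $1<p<2$: an exact identification of $\ell_n(\mathcal E,p,1)$ as $\ell_{p'/2}$, an exact identification of $\ell_n(\mathcal E,p,q)$ as $\ell_q$ when $q>p'$, and an upper estimate $\ell_n(\mathcal E,p,q)\subseteq\ell_{p'+\varepsilon}$ when $1<q\le p'$. In each part the strategy divides into a \emph{necessity} direction (if $T_\alpha$ is extendible, then $\alpha$ lies in the claimed sequence space) and a \emph{sufficiency} direction (membership of $\alpha$ forces $T_\alpha$ extendible). The whole proof is the operator-valued analogue of Proposition~\ref{pprimasobre2}, and the natural tools are: the summability properties of extendible forms recalled before Proposition~\ref{pprimasobre2} (extendible $n$-linear forms are absolutely $(r;2r)$-summing), the reduction of vector-valued operators to scalar forms by composing with a coordinate functional $\gamma_\beta$ as in Proposition~\ref{extendible diagonal}, and the factorization technique through diagonal operators into $\ell_1$ together with Lemma~\ref{ejemplo todas extendibles} that was used in Proposition~\ref{pprimasobre2}.

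\textbf{Necessity.} For the lower bounds on $\alpha$ I would dualize against $F^\times$ exactly as in Proposition~\ref{extendible diagonal}. Given $T_\alpha$ extendible and $\beta\in F^\times$, the scalar form $\phi_{\alpha\beta}=\gamma_\beta\circ T_\alpha$ is extendible on $\ell_p$, hence absolutely $(\tfrac{p'}{2};p')$-summing; testing on the canonical vectors $e_k$ and using $w_{p'}((e_k)_k)=1$ gives, as in Proposition~\ref{pprimasobre2}, that $\alpha\beta\in\ell_{p'/2}$ with a uniform bound. For part (1), where $F=\ell_1$ and $F^\times=\ell_\infty$, taking $\beta=(1,1,\dots)$ directly yields $\alpha\in\ell_{p'/2}$. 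For parts (2) and (3), the estimate $\alpha\beta\in\ell_{p'/2}$ for all $\beta\in F^\times=\ell_{q'}$ is, by duality of $\ell_r$-spaces, precisely a statement about the multiplier space $(\ell_{q'},\ell_{p'/2})$: a sequence $\alpha$ with $\alpha\beta\in\ell_{p'/2}$ for every $\beta\in\ell_{q'}$ lies in $\ell_s$ where $\tfrac1s=\tfrac{2}{p'}-\tfrac1{q'}=\tfrac{2}{p'}-1+\tfrac1q$. A direct computation shows that when $q>p'$ this exponent $s$ equals $q$ (giving the upper half of $\ell_n(\mathcal E,p,q)\subseteq\ell_q$), and when $1<q\le p'$ one must instead invoke that the form is $(r;2r)$-summing for \emph{every} $r\ge 1$: letting $r\to\infty$ (equivalently using $(\tfrac{p'+\varepsilon}{2};p'+\varepsilon)$-summing behaviour, available by interpolation against the trivial $(\infty;\cdot)$ bound), one obtains $\alpha\beta\in\ell_{(p'+\varepsilon)/2}$ and the multiplier computation against $\beta\in\ell_{q'}$ yields $\alpha\in\ell_{p'+\varepsilon}$.

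\textbf{Sufficiency.} For the reverse inclusions in (1) and (2) I would build the factorization from Proposition~\ref{pprimasobre2} in the vector-valued setting. Write $\sigma(k)=\alpha(k)^{1/2}$ and route two of the $\ell_p$ factors through $D_\sigma:\ell_p\to\ell_1$, leaving the others as inclusions, so that $T_\alpha$ factors through the extendible operator $\Psi\colon\ell_1\times\ell_1\times\ell_p\times\cdots\times\ell_p\to\ell_q$ corresponding to the constant sequence; extendibility of $\Psi$ comes from Corollary~\ref{enele1todosext} (for the $q$ part, via Lemma~\ref{ejemplo todas extendibles} applied to the associated $(n+1)$-linear form on $\ell_1\times\ell_1\times\cdots\times\ell_{q'}$). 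For part (1) the operator into $\ell_1$ has $\|D_\sigma\|^2=\|\alpha\|_{\ell_{p'/2}}$ since $D_\sigma\colon\ell_p\to\ell_1$ has norm $\|\sigma\|_{\ell_{p'}}=\|\alpha\|_{\ell_{p'/2}}^{1/2}$. For part (2) with $q>p'$, since $\ell_q$ is Köthe reflexive, the inclusion $\ell_q\subseteq\ell_n(\mathcal E,p,q)$ is supplied directly by Lemma~\ref{extendibles entre bss}, so only the necessity direction needs the summing argument, which I expect to close cleanly. Part (3) is merely the upper estimate, so no sufficiency direction is required.

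\textbf{Main obstacle.} The delicate point is part (3) and, more generally, knowing that the multiplier exponent bookkeeping produces exactly $\ell_q$ rather than a strictly smaller or larger space. The inequality $s\ge q$ (so $\ell_s\subseteq\ell_q$ fails in the wrong direction unless equality holds) must be checked carefully: the cleanest route is to verify that the sharp summing exponent $(\tfrac{p'}{2};p')$ is genuinely available for extendible forms — this is the content of the cited \cite[Theorem 3.15]{BMP} with $r=p'/2$ and $2r=p'$, valid precisely because $p'/2\ge1$ when $p\le2$ — and then to confirm that the Köthe-duality computation $\tfrac1s=\tfrac2{p'}-\tfrac1{q'}$ collapses to $\tfrac1q$ exactly on the range $q>p'$ and fails to do so for $q\le p'$, which is why only an $\varepsilon$-estimate survives there. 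The arbitrariness of $\varepsilon$ in (3) reflects that the endpoint $r=\infty$ is not attained, so the argument yields $\ell_{p'+\varepsilon}$ but not $\ell_{p'}$.
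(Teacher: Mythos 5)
Your part (1) is essentially the paper's argument and is correct: the factorization through $D_\sigma$ and Lemma~\ref{ejemplo todas extendibles} gives $\ell_{\frac{p'}{2}}\subseteq\ell_n(\mathcal E,p,1)$, and composing with $\beta=(1,1,\dots)\in\ell_\infty$ and applying Proposition~\ref{pprimasobre2} gives the converse; likewise the sufficiency in (2) via Lemma~\ref{extendibles entre bss} is exactly what the paper does. The fatal problem is the necessity direction in parts (2) and (3), where you scalarize through functionals $\gamma_\beta$, $\beta\in\ell_{q'}$, and then run multiplier arithmetic. The sharp scalar information available is $\alpha\beta\in\ell_{p'/2}$, and the space of multipliers from $\ell_{q'}$ into $\ell_{p'/2}$ is $\ell_s$ with $\frac1s=\frac{2}{p'}-\frac{1}{q'}=\frac1q-\left(1-\frac{2}{p'}\right)$, and is $\ell_\infty$ when this quantity is nonpositive. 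Since $1<p<2$ forces $p'>2$, we have $1-\frac{2}{p'}>0$, so $\frac1s<\frac1q$ strictly: your assertion that the exponent ``collapses to $\frac1q$'' when $q>p'$ holds only if $p'=2$, i.e.\ $p=2$, which is excluded. Thus you obtain only $\alpha\in\ell_s$ with $s>q$, which is strictly weaker than the claim; worse, for $q\geq\frac{p'}{p'-2}$ the exponent is nonpositive and the argument gives no information at all (e.g.\ $p=\frac43$, $q=5$). The same defect kills part (3): enlarging $r$ in the $(r;2r)$-summing scale only weakens the scalar conclusion ($\alpha\beta\in\ell_r$ with $r$ large), so the multiplier bookkeeping can never reach $\ell_{p'+\varepsilon}$; at the endpoint $q=p'$ it yields at best $\ell_{p'/(3-p')}$ (and nothing when $p'\geq3$), which misses the claim for small $\varepsilon$.

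The missing idea is that one must exploit summability of the vector-valued operator $T_\alpha$ itself rather than of its scalarizations. This is what the paper does: for part (2), since $q>p'>2$ the space $\ell_q$ has cotype $q$, and \cite[Proposition 3.4]{BoPe} then shows that every extendible $n$-linear operator with values in $\ell_q$ is absolutely $(q;p')$-summing; testing on $(e_k)_k$, whose weak $\ell_{p'}$-norm equals $1$, gives $\left(\sum_k|\alpha(k)|^q\right)^{1/q}\le K$, i.e.\ $\alpha\in\ell_q$. For part (3) the paper invokes \cite[Proposition 5.3]{BMP}: extendible operators are absolutely $(p'+\varepsilon;p')$-summing for every $\varepsilon>0$, and the same test yields $\alpha\in\ell_{p'+\varepsilon}$. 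These vector-valued summing exponents are genuinely stronger than anything that survives composition with coordinate functionals, because the scalar exponent $\left(\frac{p'}{2};p'\right)$ is already sharp for diagonal forms (Proposition~\ref{pprimasobre2}) and, as the computation above shows, it degrades under the Köthe-duality step. So your proposal proves (1) and the easy halves of (2), but the core of (2) and all of (3) require the two cited vector-valued results, which cannot be bypassed by your route.
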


\begin{proof} \emph{(1)} If $\alpha\in\ell_{\frac{p'}2}$,  the same argument of the proof of Proposition \ref{pprimasobre2} shows that $\phi_\alpha:\ell_p\times\cdots\times\ell_p\times\ell_{\infty}\to\mathbb K$ is extendible with extendible norm not bigger than $\|\alpha\|_{\ell_{\frac{p'}2}}$. Then, $T_\alpha:\ell_p\times\cdots\times\ell_p\to\ell_{1}$ is extendible and $\|T_\alpha\|_{_{\mathcal E(^n\ell_p,\ell_1)}}\le\|\alpha\|_{\ell_{\frac{p'}2}}$.

Suppose now that $T_\alpha\in\mathcal E(^n\ell_p,\ell_1)$. Then, for every $\beta\in\ell_{\infty}$, we have $\phi_\beta\circ T_\alpha=\phi_{\alpha\beta}\in \mathcal E(^n\ell_p)$. By  Proposition \ref{pprimasobre2} we derive that $\alpha\beta=(\alpha(k)\beta(k))_k$ belongs to $\ell_{\frac{p'}{2}}$. Since this happens for every $\beta\in\ell_{\infty}$, we conclude that $\alpha\in\ell_{\frac{p'}{2}}$.

\emph{(2)} Since $q>p'$,  $\ell_q$ has cotype $q>2$. Then, we can apply \cite[Proposition 3.4]{BoPe} to derive that if $T_\alpha\in\mathcal E(^n\ell_p,\ell_q)$ then $T_\alpha$ is absolutely $(q;p')$-summing. So, there exists $K>0$, such that, for every $N$ we have
$$
\left(\sum_{k=1}^N |\alpha_k|^{q}\right)^{\frac{1}{q}}=\left(\sum_{k=1}^N \|\phi_\alpha( e_k,\dots, e_k)\|^{q}\right)^{\frac{1}{q}}\le K\cdot w_{p'}\left(( e_k)_{k=1}^N\right)^n =K.
$$ Hence, $\alpha\in\ell_q$. The other inclusion was already shown in Lemma \ref{extendibles entre bss}.

\emph{(3)} Let $T_\alpha\in\mathcal E(^n\ell_p,\ell_q)$ then, by \cite[Proposition 5.3]{BMP}, $T_\alpha$ is absolutely $(p'+\varepsilon;p')$-summing,  for every $\varepsilon>0$. Reasoning as in the previous item, we see that $\alpha\in\ell_{p'+\varepsilon}$.
\end{proof}


\end{document}